%%%%%%%%%%%%%% ArticlePhaseFieldAnalysis.tex %%%%%%%%%%%%%%%%
%
% Convergence of the Finite Volume Method on Unstructured Meshes
% for a 3D Phase Field Model of Solidification
%
% draft submitted to "Elsevier's Applied Numerical Mathematics"
%
% (C) 2019-2020 Ales Wodecki, Pavel Strachota, Michal Benes
%
% This file is based on the LaTeX source generated by LyX
% for use with the "elsarticle" class
%
%%%%%%%%%%%%%%%%%%%%%%%%%%%%%%%%%%%%%%%%%%%%%%%%%%%%%%%%%%%%%

%\documentclass[american,preprint,12pt]{elsarticle}
%\documentclass[a4paper,american,preprint,review,12pt]{elsarticle}
%\documentclass[a4paper,american,times,5p]{elsarticle}
\documentclass[a4paper,american,times,3p]{elsarticle}
%\documentclass[a4paper,american,5p]{elsarticle}

% General
\usepackage[T1]{fontenc}
\usepackage[utf8]{inputenc}
\usepackage{babel}

% Mathematics
\usepackage{amsmath}
\usepackage{amsthm}
\usepackage{amssymb}
%% Graphics and fonts
%\usepackage{txfonts}
%\usepackage{newtxtext,newtxmath}
\usepackage{graphicx}
\usepackage{color}
\usepackage{bm}
% Tables
%\usepackage{array}
%\usepackage{rotating}
%\usepackage{multirow}

\usepackage{stmaryrd}
\usepackage{stackrel}

\makeatletter

%%%%%%%%%%%%%%%%%%%%%%%%%%%%%% LyX specific LaTeX commands.
%\special{papersize=\the\paperwidth,\the\paperheight}

%% Because html converters don't know tabularnewline

%% A simple dot to overcome graphicx limitations

% for output of "greyed out" comments
\definecolor{note_fontcolor}{rgb}{1, 0.667969, 0}
\newenvironment{lyxgreyedout}
  {\textcolor{note_fontcolor}\bgroup\ignorespaces}
  {\ignorespacesafterend\egroup}

% Change "greyed out" comments appearance
% according to: https://tex.stackexchange.com/questions/204151/make-notes-from-lyx-into-annotations-on-the-pdf
\usepackage{environ}
% convert to "TODO" frames
\usepackage[colorinlistoftodos,prependcaption,textsize=small]{todonotes}
\RenewEnviron{lyxgreyedout}{\todo[inline]{\protect\BODY}}
% convert to PDF notes (requires that just plain text is present (no math))
%\usepackage{pdfcomment}
%\RenewEnviron{lyxgreyedout}{\pdfcomment{\protect\BODY}}

% for protecting chunks of code (namely, lyxgreyedout environments inside figure captions)
\usepackage{etoolbox}

%%%%%%%%%%%%%%%%%%%%%%%%%%%%%% Textclass specific LaTeX commands.

  \theoremstyle{definition}
  \newtheorem{defn}{\protect\definitionname}
  \providecommand{\definitionname}{Definition}
  \theoremstyle{plain}
  \newtheorem{lem}{\protect\lemmaname}
  \providecommand{\lemmaname}{Lemma}
  \theoremstyle{remark}
  \newtheorem*{rem*}{\protect\remarkname}
  \providecommand{\remarkname}{Remark}
  \theoremstyle{plain}
  \newtheorem{thm}{\protect\theoremname}
  \providecommand{\theoremname}{Theorem}

%%%%%%%%%%%%%%%%%%%%%%%%%%%%%% User specified LaTeX commands.

%% Make the multiline figure/table captions indent so that the second
%% line "hangs" right below the first one.
%\usepackage[format=hang]{caption}

%% Prints out all vectors in bold type instead of printing an arrow above them
\renewcommand{\vec}[1]{\boldsymbol{#1}}
%\renewcommand{\vec}[1]{\mathbf{#1}}

%% Replace standard \cite by the parenthetical variant \citep
%\renewcommand{\cite}{\citep}

\usepackage{hyperref}

\makeatother

%%%%%%%%%%%%%%%%%%%%%%%%%%%%%%%%%%%%%%%%%%%%%%%%%%%%%%%%%%%%%%%%%%%%%%%%%%%%%%%%%%%%%%%%%

%% Here come the customized definitions

\newcommand{\e}{\text{e}}
\newcommand{\dd}{\text{d}}
\newcommand{\N}{\mathbb{N}}
\newcommand{\R}{\mathbb{R}}
\newcommand{\Cs}{\text{C}}
\newcommand{\Ls}{\text{L}}
\newcommand{\Hs}{\text{H}}

\newcommand{\D}{\text{\ensuremath{\mathcal{D}}}}
\newcommand{\E}{\text{\ensuremath{\mathcal{E}}}}

\newcommand{\J}{\text{\ensuremath{\mathcal{J}}}}
\newcommand{\Q}{\text{\ensuremath{\mathcal{Q}}}}

%% already defined in elsarticle, so we must redefine:
\renewcommand{\H}{\text{\ensuremath{\mathcal{H}}}}

\renewcommand{\P}{\text{\ensuremath{\mathcal{P}}}}
\renewcommand{\S}{\text{\ensuremath{\mathcal{S}}}}

%%%%%%%%%%%%%%%%%%%%%%%%%%%%%%%%%%%%%%%%%%%%%%%%%%%%%%%%%%%%%%%%%%%%%%%%%%%%%%%%%%%%%%%%%

\begin{document}
\begin{frontmatter}

\title{Convergence of the Finite Volume Method on Unstructured Meshes for a 3D Phase Field Model of Solidification}

%\titlerunning{Convergence of FV on Unstructured Meshes for a 3D PF Model of Solidification}

\author[fnspe]{Ale\v{s} Wodecki}\ead{ales.wodecki@fjfi.cvut.cz}
\author[fnspe]{Pavel Strachota\corref{cor1}}\ead{pavel.strachota@fjfi.cvut.cz}
\author[fnspe]{Michal Bene\v{s}}\ead{michal.benes@fjfi.cvut.cz}

\cortext[cor1]{Corresponding author. Phone: +420 224 358 563}

\address[fnspe]{Deptartment of Mathematics, Faculty of Nuclear Sciences
and Physical Engineering, Czech Technical University in Prague. Trojanova
13, 120 00 Praha 2, Czech Republic}

\begin{abstract}

We present a convergence result for the finite volume method applied
to a particular phase field problem suitable for simulation of pure substance
solidification. The model consists of the heat equation and the phase field
equation with a general form of the reaction term which encompasses a variety
of existing models governing dendrite growth and elementary interface tracking
problems. We apply the well known compact embedding techniques in the context
of the finite volume method on admissible unstructured polyhedral meshes.
We develop the necessary interpolation theory and derive an a priori estimate
to obtain boundedness of the key terms. Based on this estimate, we conclude
the convergence of all of the terms in the equation system.

\end{abstract}

\begin{keyword}
A priori estimate \sep compact embedding \sep convergence \sep finite volume method \sep phase field problem
\end{keyword}

%%%%% AMS/MSC/PACs/Keywords %%%%%%%%%%%
%% AMS 2020 Subject Classification codes legend:
%% https://mathscinet.ams.org/msc/msc2020.html
%% ------------------------------------------------------------------------
%% 35K51 Partial differential equations | Parabolic equations and systems |
%%       Initial-boundary value problems for second-order parabolic systems
%% 35K57 Partial differential equations | Parabolic equations and systems |
%%       Reaction-diffusion equations
%% 65M20 Numerical analysis | Partial differential equations, initial value
%%       and time-dependent initial-boundary value problems | Method of lines
%%
%% 80A22 Classical thermodynamics, heat transfer | Stefan problems, phase changes, etc.
%% 80M12 Classical thermodynamics, heat transfer | Finite volume methods applied to problems in thermodynamics and heat transfer
%% ------------------------------------------------------------------------

\end{frontmatter}

%% --------------------- THE PAPER TEXT ITSELF ---------------------
%LATEX_EXPORT_BEGIN

\setuptodonotes{disable}

\section{Introduction}

Phase field modeling \cite{Boettinger-SolidificationMicro_Overview,Karma_Rappel-Quant_phase_field_modeling}
and level set methods \cite{Sethian-Level-set-methods} have been
utilized to solve various physical problems that involve moving interfaces.
The phase field model in particular has been deployed to model problems
such as crack propagation \cite{Aranson-Continuum-Field-description},
viscous fingering \cite{Folch-Phase-Field-viscosity}, two phase flow
of immiscible fluids \cite{Luo-TwoPhase_Flow_Phase_Field,Balcazar-Level_set-Bubbles},
phase transitions in porous media \cite{ZakBenesTissa-FreezingInPorousMedium,Alpak-Phase-Field-TwoPhase-Porous,Amiri-Level_set-vs-Phase_Field-in-TwoPhase_Flow},
and prominently, crystal growth. Historically, phase field modelling
was made possible through the study of functionals describing interfacial
energy \cite{Physics-first-order-phase,free-energy-cahn-hil}. Throughout
the years, the model derived using the groundwork laid out by Cahn
and Hilliard gained more concrete form suitable for particular applications
\cite{Allen-Cahn-orig,Karma_Rappel-Num_sim_3D_dendrites,Karma_Rappel-Quant_phase_field_modeling,Physics-first-order-phase}.
More recently, the functional theory of phase transition has been
formalized further leading to a more rigorous treatment \cite{PF-cystal-growth-2009,PF-desity-theory-freezing}
and many new applications were found \cite{PF-multi-comp-flow,PF-fracture-crack-propagation,PF-Asy-Analysis}.
Our main focus is on a phase field model formulation that finds use
in the modeling of dendritic growth and grain evolution during solidification
\cite{Jeong_Goldenfeld_Dantzig-PF-FEM-3D-Flow,Karma_Rappel-Num_sim_3D_dendrites,Karma_Rappel-Quant_phase_field_modeling,Suwa-3D-MPF_grain_growth,dissertation,ISPMA14-Pavel_Ales-ActaPhPoloA}.

In this paper, we present the mathematical analysis of the finite
volume method (FVM) on an unstructured mesh (defined in \cite{FVM-Eymard})
for a system of equations known as the phase field model (PFM) with
a single order parameter. The two predominant classes of results on
the numerical analysis of the PFM are adaptations of \cite{FVM-Eymard}
and the use of approximative sequences of functions and compact embedding
that show existence of the weak solution and convergence at the same
time \cite{Coudiere-FVM-convergence,Benes-Asymptotics,Theoretical-Num-Analysis}.
One can find examples of the first approach in our previous work \cite{BIT-Error_estimate_FVM}
or \cite{PF-dynamic-bc} and the use of compact embedding techniques
is demonstrated in \cite{FVM-PF-VoidElectro,PF-memory,PF-Asy-Analysis,PF-NS-analysis-fluid-interaction}.
In our previous work \cite{BIT-Error_estimate_FVM}, we took the first
mentioned approach and derived estimates that show convergence rates
of the FVM applied to a simplified phase field (Allen-Cahn) equation
on an unstructured mesh . In this paper, we have chosen the second
of the two mentioned approaches. The novelty of our proof lies in
the application of the well known compact embedding techniques \cite{Kufner-Function-Spaces,Theoretical-Num-Analysis}
to the general setting of FVM on an unstructured mesh applied to the
PFM, which gives both the existence of the weak solution and the convergence
of the numerical scheme to this solution. We also introduce an interpolation
theory tailored for this purpose. The weak convergence is given by
an a-priori estimate that is derived in detail.

For the purposes of the analysis, we consider the isotropic PFM with
a generically formulated reaction term, which is well suited for practical
applications: In our related work \cite{arXiv-PhaseField-Focusing-Latent-Heat},
we propose a novel form of the PFM which is compatible with the presented
numerical analysis. In addition, we also introduce its anisotropic
variant. We demonstrate that numerical simulations of rapid solidification
\cite{Bragard_Karma-Higly_undercooled_solidif,Willnecker_Herlach-rapid_solidification_nonequilibrium,Herlach-Non-equilibrium-solidification-metals}
based on this model produce results both in qualitative and quantitative
agreement with experiments. Some traditional models such as \cite{Kobayashi-PF-Dendritic}
can also be treated by the presented framework.

\section{\label{chap:Mathematical-Analysis-of}Problem Formulation}

Let $\Omega\subset\mathbb{R}^{3}$ be a bounded polyhedral domain
and let $\mathcal{J=}(0,T)$ be a time interval. The problem in question
reads

\begin{align}
\frac{\partial u}{\partial t} & =\Delta u+L\frac{\partial p}{\partial t} & \mathcal{\text{in }J}\times\Omega,\label{eq:PhaseIsoHeat-1}\\
\alpha\xi^{2}\frac{\partial p}{\partial t} & =\xi^{2}\Delta p+f(u,p,\xi) & \mathcal{\text{in }J}\times\Omega,\label{eq:PhaseIso-1}\\
u\mid_{t=0} & =u_{\text{ini}},p\mid_{t=0}=p_{\text{ini}} & \mathcal{\text{in }}\Omega,\label{eq:initIso-1}\\
u\mid_{\partial\Omega} & =u_{\partial\Omega} & \mathcal{\text{on }J}\times\partial\Omega,\label{eq:DorN1-1}\\
p\mid_{\partial\Omega} & =p_{\partial\Omega} & \mathcal{\text{on }J}\times\partial\Omega,\label{eq:DorN2-1}
\end{align}
where $u$ and $p$ are the temperature field and the phase field
(i.e. order parameter), respectively. Consider the function $f(u,p,\xi)$
of the form

\begin{equation}
f(u,p,\xi)=f_{0}\left(p\right)+b\beta\xi\varLambda\left(g\left(u,p\right)\right),\label{eq:reaction-term}
\end{equation}
where

\[
f_{0}\left(p\right)\equiv p\left(1-p\right)\left(p-\frac{1}{2}\right),
\]
$L,\alpha,\beta,b$ are positive constants \cite{Benes-Math_comp_aspects_solid}
and $\xi$ is a parameter associated with the thickness of the diffuse
interface. Assume that the initial conditions satisfy $u_{\text{ini}},p_{\text{ini}}\in\Cs^{2}\left(\Omega\right)$
and the Dirichlet boundary conditions satisfy $u_{\partial\Omega},p_{\partial\Omega}\in\Cs\left(\partial\Omega\right)$.%
\begin{lyxgreyedout}
\textbf{NOTE:} Originally, we had $u_{\text{ini}},p_{\text{ini}}\in\Cs^{\infty}\left(\Omega\right)$
here. The assumption $u_{\text{ini}},p_{\text{ini}}\in\Cs^{2}\left(\Omega\right)$
is explicitly stated again in the lemmas and theorems that use it.
$u_{\partial\Omega},p_{\partial\Omega}\in\Cs\left(\partial\Omega\right)$
is later replaced by $u_{\partial\Omega}=p_{\partial\Omega}=0$. However,
it is probably reasonable to have such assumptions at the beginning
since the finite volume scheme doesn't make practical sense without
them.%
\end{lyxgreyedout}

The function $g$ in the reaction term (\ref{eq:reaction-term}) can
be an arbitrary function of $u$ and $p$, which covers several well
known variations of the phase field model such as the Kobayashi model
\cite{Kobayashi-Anis_curvature_crystals} or some of the simpler models
proposed in \cite{Benes-Math_comp_aspects_solid,Benes-Anisotropic_phase_field_model}.
In addition, we require that $g$ is subject to a ``limiter'' function
$\Lambda\in\Cs^{1}\left(\R\right)$ that bounds the range of $g$
to a fixed interval $\left(H_{\text{inf}},H_{\text{sup}}\right)$,
which is vital to the convergence analysis. For example, a suitable
choice is

\[
\varLambda\left(x\right)=\begin{cases}
x & x\in\left[H_{0},H_{1}\right],\\
H_{1}+\frac{2}{\pi}\left(H_{\text{sup}}-H_{1}\right)\text{arctan}\left(\frac{\pi}{2}\frac{x-H_{1}}{H_{\text{sup}}-H_{1}}\right) & x\in\left(H_{1},+\infty\right),\\
H_{0}+\frac{2}{\pi}\left(H_{\text{inf}}-H_{0}\right)\text{arctan}\left(\frac{\pi}{2}\frac{x-H_{0}}{H_{\text{inf}}-H_{0}}\right) & x\in\left(-\infty,H_{0}\right),
\end{cases}
\]
where $H_{\text{inf}}<H_{0}<H_{1}<H_{\text{sup}}$. On the other hand,
during simulations using the above cited models, the term $g\left(u,p\right)$
remains bounded and thus introducing $\Lambda$ with a sufficiently
wide interval $\left[H_{0},H_{1}\right]$ (where $\Lambda\left(x\right)=x$)
has no practical implications.

\section{Finite Volume Method on an Unstructured Mesh\label{sec:Finite-volume-method}}

Let $B\subset\mathbb{R}^{3}$. The 3- or 2-dimensional Lebesgue measure
of the set $B$ will be denoted $m(B)$ or $\widetilde{m}(B)$, respectively.
In cases where the dimension of the object in question is clear, the
tilde will be omitted. The definitions in this section agree with
\cite{FVM-Eymard}.
\begin{defn}
\label{def:AdmissibleMesh}Let $\Pi\subset2^{\Omega}$ be a set such
that for all $K\in\Pi$, $K$ is polygonal and convex, and let $\E$
be the set of all faces. If $\Pi$ and $\E$ satisfy
\end{defn}
\begin{enumerate}
\item $\overline{\underset{K\in\varPi}{\uplus}K}=\Omega$
\item $\forall K\in\Pi:\exists\E_{K}\subset\E:\partial K=\underset{\sigma\in\E_{K}}{\cup}\overline{\sigma}$
\item $\left(\forall K,L\in\Pi\right)\left(K\neq L\Rightarrow\left(\widetilde{m}\left(\overline{K}\cap\overline{L}\right)=0\vee\left(\exists\sigma\in\E\right)\left(\overline{K}\cap\overline{L}=\sigma\right)\right)\right)$
\item $\left(\exists P\subset\Omega\right)\left(\left(\forall K\in\Pi\right)\left(\exists_{1}\vec{x}_{K}\in P\right)\left(\vec{x}_{K}\in\overline{K}\right)\wedge\left(\forall\vec{x}\in P\right)\left(\exists K\in\Pi\right)\left(\vec{x}\in\overline{K}\right)\right)$
\item \label{enu:admissibility-orthogonality}$\left(\forall K,L\in\Pi\right)\left(K\neq L\Rightarrow\left(\vec{x}_{K}\neq\vec{x}_{L}\wedge\left(\overline{\vec{x}_{K}\vec{x}_{L}}\perp\overline{K}\cap\overline{L}\right)\right)\right),$
\end{enumerate}
then we call $\Pi$ an admissible mesh and $\E$ the set of faces
associated with the mesh $\Pi$.

The elements of $\Pi$ are called finite volumes (or cells). Definition
\ref{def:AdmissibleMesh} ensures that two finite volumes only intersect
at a point or a face and that the line segment connecting two significant
points representing two adjacent volumes will always be perpendicular
to their common face. Let $\D_{K,L}$ denote the line segment connecting
two significant points $\overline{\vec{x}_{K}\vec{x}_{L}}\equiv\D_{K,L}$.
Let $K\in\Pi$ such that $\exists\sigma\in\E_{K}:\sigma\subset\partial\Omega$.
Assume that $\vec{x}_{K}\notin\sigma$, then denote $\D_{K,\sigma}\equiv\overline{\vec{x}_{K}\vec{y}_{\sigma}}$,
where $\vec{y}_{\sigma}$ is chosen such that $\D_{K,\sigma}\perp\sigma$.
In agreement with \cite{FVM-Eymard}, we use the following conventions:
\begin{itemize}
\item $\E_{\text{ext}}=\left\{ \sigma\in\E:\sigma\subset\partial\Omega\right\} ,\E_{\text{int}}=\E-\E_{\text{ext}}$.
\item For a face such that $\sigma=K\left|L\right.\equiv\overline{K}\cap\overline{L}$,
let $d_{\sigma}=\left|\D_{K,L}\right|$ be the Euclidean distance
between points $\vec{x}_{K}$ and $\vec{x}_{L}$. Similarly, we define
$d_{K,\sigma}=\left|\D_{K,\sigma}\right|$. For $\sigma\in\E_{K}\cap\E_{\text{ext}}$,
we put $d_{\sigma}\equiv d_{K,\sigma}.$
\item $\tau_{\sigma}\equiv\frac{m\left(\sigma\right)}{d_{\sigma}}.$
\item $\H^{\Pi}$ denotes the set of all functions $w:P\rightarrow\mathbb{R}$,
where $P$ has the meaning of the set of all significant points of
$\Pi$, given by Definition \ref{def:AdmissibleMesh}. For $w\in\H^{\Pi}$,
the simplified notation 
\begin{equation}
w_{K}\equiv w\left(\vec{x}_{K}\right)\qquad\forall K\in\Pi\label{eq:wK-simplified-notation}
\end{equation}
will be used.
\end{itemize}
\begin{rem*}
Every admissible mesh satisfies
\begin{equation}
\underset{\sigma\in\E}{\sum}m\left(\sigma\right)d_{\sigma}=3m\left(\Omega\right).\label{eq:dual_cell_volume_sum_3D}
\end{equation}
\end{rem*}
\begin{proof}
For each $\sigma\in\E_{\text{int}}$, $\sigma=K|L$, we have $d_{\sigma}=d_{K,\sigma}+d_{L,\sigma}$.
Thanks to the orthogonality condition \ref{enu:admissibility-orthogonality}
in Definition \ref{def:AdmissibleMesh}, the expression $\underset{\sigma\in\E}{\sum}\frac{1}{3}m\left(\sigma\right)d_{\sigma}$
represents the sum of volumes of the pyramids with base $\sigma$
and some vertex $\vec{x}_{K}$ such that $\sigma\in\E_{K}$. These
pyramids exactly cover the volume of all cells in $\Pi$ (recall the
definition of $d_{\sigma}$ when $\sigma\in\E_{\text{int }}$ and
when $\sigma\in\E_{\text{ext}}$), which in turn cover the whole domain
$\Omega$.
\end{proof}
Let $u_{\Pi},p_{\Pi}:\J\to\H^{\Pi}$ be the numerical solutions of
problem (\ref{eq:PhaseIsoHeat-1})--(\ref{eq:DorN2-1}) and similarly
to (\ref{eq:wK-simplified-notation}), denote 
\[
u_{K}\left(t\right)\equiv u_{\Pi}\left(t,\vec{x}_{K}\right),\text{ }p_{K}\left(t\right)\equiv p_{\Pi}\left(t,\vec{x}_{K}\right).
\]
 Consider the following reformulation of equations (\ref{eq:PhaseIsoHeat-1}),
(\ref{eq:PhaseIso-1})

\begin{align}
\frac{\partial u}{\partial t}-\Delta u & =L\frac{\partial p}{\partial t},\label{eq:HeatToInteg}\\
\alpha\frac{\partial p}{\partial t}-\Delta p & =\frac{1}{\xi^{2}}f_{0}\left(p\right)-b\beta\frac{1}{\xi}\varLambda\left(g\left(u,p\right)\right).\label{eq:phaseToInteg}
\end{align}
By integrating (\ref{eq:HeatToInteg}) and (\ref{eq:phaseToInteg})
over a finite volume $K\in\Pi$ and using Green's formula, we obtain

\begin{align}
\int_{K}\frac{\partial u}{\partial t}\left(t,\vec{x}\right)\dd\vec{x} & -\int_{\partial K}\nabla u\left(t,\vec{x}\right)\cdot\vec{n}\dd S=L\int_{K}\frac{\partial p}{\partial t}\left(t,\vec{x}\right)\dd\vec{x},\label{eq:heatAlmost}
\end{align}
\begin{align}
 & \alpha\int_{K}\frac{\partial p}{\partial t}\left(t,\vec{x}\right)\dd\vec{x}-\int_{\partial K}\nabla p\left(t,\vec{x}\right)\cdot\vec{n}\dd S\label{eq:phaseAlmost}\\
= & \frac{1}{\xi^{2}}\int_{K}f_{0}\left(p\left(t,\vec{x}\right)\right)\dd\vec{x}-\frac{b\beta}{\xi}\int_{K}\varLambda\left(g\left(u\left(\vec{x},t\right),p\left(\vec{x},t\right)\right)\right)\dd\vec{x},\nonumber 
\end{align}
where $\vec{n}$ is the outward pointing unit normal vector to $\partial K$.
Using additivity of the surface integral, we can rewrite (\ref{eq:heatAlmost})
and (\ref{eq:phaseAlmost}) as

\begin{align}
\int_{K}\frac{\partial u}{\partial t}\left(t,\vec{x}\right)\dd\vec{x}+\underset{\sigma\in\E_{K}}{\sum}-\int\nabla u\left(t,\vec{x}\right)\cdot\vec{n}_{K,\sigma}\dd S & =L\int_{K}\frac{\partial p}{\partial t}\left(t,\vec{x}\right)\dd\vec{x},\label{eq:heatAlmost-1}
\end{align}
\begin{align}
 & \alpha\int_{K}\frac{\partial p}{\partial t}\left(t,\vec{x}\right)\dd\vec{x}+\underset{\sigma\in\E_{K}}{\sum}-\int_{\sigma}\nabla p\left(t,\vec{x}\right)\cdot\vec{n}_{K,\sigma}\dd S\label{eq:phaseAlmost-1}\\
= & \frac{1}{\xi^{2}}\int_{K}f_{0}\left(p\left(t,\vec{x}\right)\right)\dd\vec{x}-\frac{b\beta}{\xi}\int_{K}\varLambda\left(g\left(u\left(\vec{x},t\right),p\left(\vec{x},t\right)\right)\right)\dd\vec{x},\nonumber 
\end{align}
where $\vec{n}_{K,\sigma}$ is the unit normal vector to $\sigma\in\E_{K}$
pointing out of $\partial K$. The following approximations may be
applied to the individual terms in equations (\ref{eq:heatAlmost-1})
and (\ref{eq:phaseAlmost-1})

\begin{align}
\int_{K}\frac{\partial p}{\partial t}\left(t,\vec{x}\right)\dd\vec{x} & \rightarrow m\left(K\right)\dot{p}_{K}\left(t\right),\label{eq:aprox1}\\
-\int_{\sigma}\nabla p\left(t,\vec{x}\right)\cdot\vec{n}_{K,\sigma}\dd S & \rightarrow F_{K,\sigma}\left(p_{\Pi}\left(t\right),p_{\partial\Omega}\left(t\right)\right),\label{eq:aprox2}\\
\int_{K}f_{0}\left(p\left(t,\vec{x}\right)\right)\dd\vec{x} & \rightarrow f_{0,K}\left(t\right)=m\left(K\right)f_{0}\left(p_{K}\left(t\right)\right),\label{eq:aprox3}\\
\int_{K}\frac{\partial u}{\partial t}\left(t,\vec{x}\right)\dd\vec{x} & \rightarrow m\left(K\right)\dot{u}_{K}\left(t\right),\label{eq:aprox4}\\
-\int_{\sigma}\nabla u\left(t,\vec{x}\right)\cdot\vec{n}_{K,\sigma}\dd S & \rightarrow F_{K,\sigma}\left(u_{\Pi}\left(t\right),u_{\partial\Omega}\left(t\right)\right),\label{eq:aprox5}\\
\int_{K}\varLambda\left(g\left(u\left(\vec{x},t\right),p\left(\vec{x},t\right)\right)\right)\dd\vec{x} & \rightarrow\varLambda_{K}\left(t\right)=m\left(K\right)\varLambda\left(g\left(u_{K}\left(t\right),p_{K}\left(t\right)\right)\right).\label{eq:aprox6}
\end{align}
For $w\in\H_{h}$ and $w_{\partial\Omega}\in\Cs\left(\partial\Omega\right)$,
we define
\begin{align}
F_{K,\sigma}\left(w,w_{\partial\Omega}\right) & =\begin{cases}
-\tau_{\sigma}\left(w_{L}-w_{K}\right) & \forall\sigma\in\E_{\text{int}},\sigma=K|L,\\
-\tau_{\sigma}\left(w_{\partial\Omega}\left(y_{\sigma}\right)-w_{K}\right)\text{ } & \forall\sigma\in\E_{\text{ext}}\cap\E_{K},
\end{cases}\nonumber \\
F_{K}\left(w,w_{\partial\Omega}\right) & =\sum_{\sigma\in\mathcal{E}_{K}}F_{K,\sigma}\left(w,w_{\partial\Omega}\right)\label{eq:flux-term}
\end{align}
for each $K\in\Pi$. Finally, in the sense of (\ref{eq:wK-simplified-notation}),
it is natural to define a function $F\left(w,w_{\partial\Omega}\right)\in\H^{\Pi}$
as
\begin{equation}
F\left(w,w_{\partial\Omega}\right)\left(\vec{x}_{K}\right)=\frac{1}{m\left(K\right)}F_{K}\left(w,w_{\partial\Omega}\right).\label{eq:flux-mesh-function}
\end{equation}

The approximations (\ref{eq:aprox1})--(\ref{eq:aprox5}) give rise
to the semi discrete scheme for the finite volume method

\begin{align}
m\left(K\right)\dot{u}_{K}\left(t\right)+F_{K}\left(u_{\Pi}\left(t\right),u_{\partial\Omega}\left(t\right)\right) & =Lm\left(K\right)\dot{p}_{K}\left(t\right),\label{eq:semiDisSchemeHeat}\\
\alpha m\left(K\right)\dot{p}_{K}\left(t\right)+F_{K}\left(p_{\Pi}\left(t\right),p_{\partial\Omega}\left(t\right)\right) & =\frac{1}{\xi^{2}}f_{0,K}\left(t\right)\label{eq:semiDisSchemePhase}\\
 & -\frac{b\beta}{\xi}\varLambda_{K}\left(t\right)m\left(K\right),\nonumber 
\end{align}
$\forall K\in\Pi$, $\forall t\in\mathcal{J}$, with the initial conditions

\begin{align}
p_{K}\left(0\right) & =p\left(0,\vec{x}_{K}\right),\label{eq:semiDisSchemeInitU}\\
u_{K}\left(0\right) & =u\left(0,\vec{x}_{K}\right).\text{ }\text{ }\text{ }\text{ }\forall K\in\Pi\label{eq:semiDisSchemeInitP}
\end{align}

The procedure of discretization using FVM led to the system of ordinary
differential equations (\ref{eq:semiDisSchemeHeat}) and (\ref{eq:semiDisSchemePhase}).
These equations will be used to prove the existence, uniqueness of
the weak solution, and convergence of the numerical scheme.

\section{Interpolation Theory}

In addition to the approximations presented in the previous section
\cite{FVM-Eymard}, we require a suitable interpolation theory that
allows us to perform the proof. For the rest of this section, assume
that $\Pi$ is an admissible mesh in the sense of Definition \ref{def:AdmissibleMesh}.
The notion of a dual mesh is crucial for defining a piecewise linear
interpolation of a function from $\H^{\Pi}.$
\begin{defn}
For all $\vec{x}_{K},\vec{x}_{L}\in P$ such that $\vec{x}_{K}\neq\vec{x}_{L}$,
$\sigma\equiv\overline{K}\cap\overline{L}$, $\left(\overline{K}\cap\overline{L}\right)\neq0$,
define $\varSigma_{K,L}\equiv\text{convhull}\left\{ \vec{x}_{K},\vec{x}_{L},\sigma\right\} $.
For all $\sigma\in\E$, define $\Sigma_{K,\sigma}\equiv\text{convhull}\left\{ \vec{x}_{K},\sigma\right\} $,
where $\vec{x}_{K}$ is the significant point of the control volume
for which $\sigma\in\E_{K}$. Then the dual mesh $\Pi^{*}$ of $\Pi$
is defined as 

\[
\Pi^{*}=\left\{ \Sigma_{K,L}\left|K\left|L\right.\in\E_{\text{int}}\right.\right\} \cup\left\{ \Sigma_{K,\sigma}\left|\sigma\in\E_{\text{ext}}\right.\right\} .
\]
\end{defn}
The elements of the dual mesh are depicted in Figure \ref{fig:Elements-of-the-DualMesh}.
Next, two interpolation operators will be introduced.

\begin{figure}
\begin{centering}
\includegraphics[width=100mm]{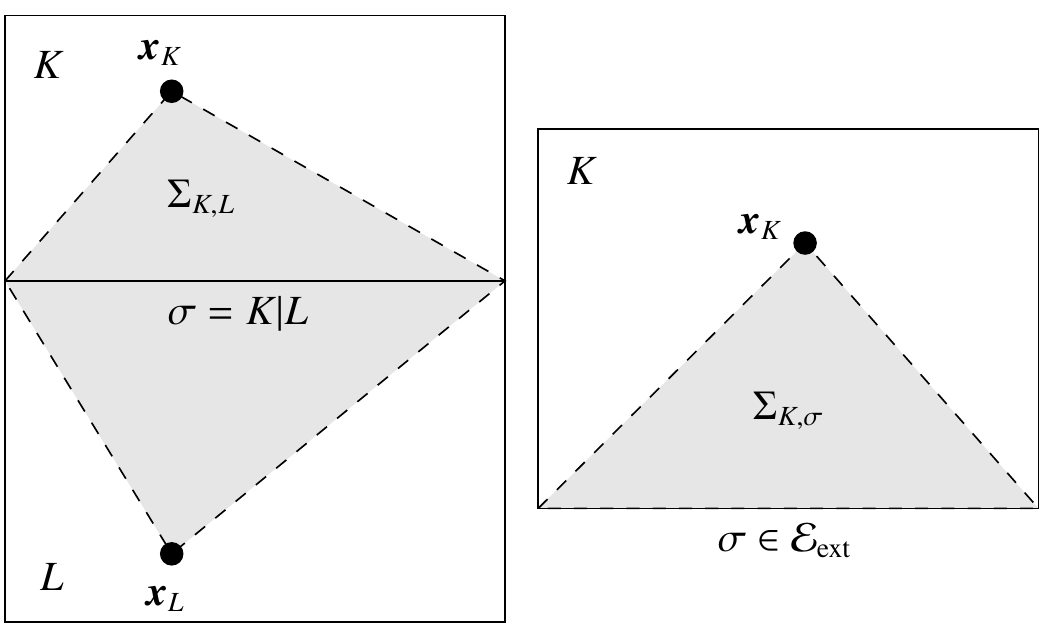}
\par\end{centering}
\caption{\label{fig:Elements-of-the-DualMesh}Construction of the dual cell
$\Sigma_{K,L}$ between the cells $K,L$ and the dual cell $\Sigma_{K,\sigma}$
at the boundary of $\Omega$.}
\end{figure}

\begin{defn}
\label{def:InterpolationOperators}Let $w\in\H^{\Pi}$. We define
the piecewise constant interpolation operator $\mathcal{S}_{\Pi}:\H^{\Pi}\rightarrow\left\{ f\left|f:\Omega\rightarrow\mathbb{R}\right.\right\} $
as

\[
\left(\mathcal{S}_{\Pi}w\right)\left(\vec{x}\right)\equiv w\left(\vec{x}_{K}\right),\forall K\in\Pi,\forall\vec{x}\in K.
\]
The piecewise linear interpolation $\mathcal{Q}_{\Pi}:\H^{\Pi}\rightarrow\left\{ f\left|f:\Omega\rightarrow\mathbb{R}\right.\right\} $
is defined as 

\[
\left(\Q_{\Pi}w\right)\left(\vec{x}\right)\equiv w\left(\vec{x}_{K}\right)+\frac{1}{\left\Vert \vec{x}_{L}-\vec{x}_{K}\right\Vert ^{2}}\left(\vec{x}-\vec{x}_{K}\right)\cdot\left(\vec{x}_{L}-\vec{x}_{K}\right)\left[w\left(\vec{x}_{L}\right)-w\left(\vec{x}_{K}\right)\right]
\]

\[
\forall\varSigma_{K,L}\in\Pi^{*},\vec{x}\in\varSigma_{K,L},
\]
and
\[
\left(\Q_{\Pi}w\right)\left(\vec{x}\right)\equiv w\left(\vec{x}_{K}\right)+\frac{1}{\left\Vert \vec{y}_{\sigma}-\vec{x}_{K}\right\Vert ^{2}}\left(\vec{x}-\vec{x}_{K}\right)\cdot\left(\vec{y}_{\sigma}-\vec{x}_{K}\right)\left[-w\left(\vec{x}_{K}\right)\right]
\]
\[
\forall\varSigma_{K,\sigma}\in\Pi^{*},\vec{x}\in\varSigma_{K,\sigma}.
\]
\end{defn}
\begin{defn}
\label{def:inner-product-on-mesh}For $v,w\in\H^{\Pi}$, we define
\end{defn}
\begin{align*}
\left(v,w\right)_{\Pi} & \equiv\underset{K\in\Pi}{\sum}m\left(K\right)v_{K}w_{K},\\
\left\Vert w\right\Vert _{\Pi} & \equiv\sqrt{\left(w,w\right)_{\Pi}^{2}},\\
\left\llbracket w\right\rrbracket _{\Pi}^{2} & \equiv\underset{{\sigma\in\E_{K}\cap\E_{\text{int}}\atop \sigma=K|L}}{\sum}\tau_{\sigma}\left(w_{K}-w_{L}\right)^{2}+\underset{{\sigma\in\E_{\text{ext}}\atop \sigma\in\mathcal{E}_{K}}}{\sum}\tau_{\sigma}w_{K}^{2}.
\end{align*}

\begin{lem}
\label{lem:discrete-to-L2-relationships}Let $v,w\in\H^{\Pi}$, then
the relationships
\begin{align}
\left(v,w\right)_{\Pi} & =\left(\mathcal{S}_{\Pi}v,\mathcal{S}_{\Pi}w\right)_{\Ls^{2}\left(\Omega\right)},\label{eq:innerp1}\\
\left\Vert w\right\Vert _{\Pi}^{2} & =\left\Vert \mathcal{S}_{\Pi}w\right\Vert _{\Ls^{2}\left(\Omega\right)}^{2},\label{eq:norm2}\\
\left\llbracket w\right\rrbracket _{\Pi}^{2} & =3\left\Vert \left|\nabla\mathcal{Q}_{\Pi}w\right|\right\Vert _{\Ls^{2}\left(\Omega\right)}^{2}.\label{eq:norm3}
\end{align}
hold.
\end{lem}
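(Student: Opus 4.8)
The plan is to check the three identities one at a time: (\ref{eq:innerp1}) and (\ref{eq:norm2}) are immediate from the piecewise constancy of $\S_{\Pi}$, whereas (\ref{eq:norm3}) reduces to computing the (piecewise constant) gradient of $\Q_{\Pi}w$ and the volumes of the dual cells. For (\ref{eq:innerp1}), note that $\S_{\Pi}v$ takes the constant value $v_{K}$ on the interior of each $K\in\Pi$, and likewise $\S_{\Pi}w$; since the cells cover $\Omega$ up to a set of measure zero (Definition~\ref{def:AdmissibleMesh}),
\[
\left(\S_{\Pi}v,\S_{\Pi}w\right)_{\Ls^{2}\left(\Omega\right)}=\sum_{K\in\Pi}\int_{K}v_{K}w_{K}\,\dd\vec{x}=\sum_{K\in\Pi}m\left(K\right)v_{K}w_{K}=\left(v,w\right)_{\Pi},
\]
and (\ref{eq:norm2}) follows by taking $v=w$.

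For (\ref{eq:norm3}), I would first observe that on each dual cell the formulas in Definition~\ref{def:InterpolationOperators} exhibit $\Q_{\Pi}w$ as an affine function of $\vec{x}$, so its gradient is constant there: on $\varSigma_{K,L}$,
\[
\nabla\Q_{\Pi}w=\frac{w_{L}-w_{K}}{\left\Vert \vec{x}_{L}-\vec{x}_{K}\right\Vert ^{2}}\left(\vec{x}_{L}-\vec{x}_{K}\right),\qquad\text{so}\qquad\left|\nabla\Q_{\Pi}w\right|^{2}=\frac{\left(w_{K}-w_{L}\right)^{2}}{d_{\sigma}^{2}},
\]
using $d_{\sigma}=\left\Vert \vec{x}_{L}-\vec{x}_{K}\right\Vert$; and on a boundary dual cell $\varSigma_{K,\sigma}$, $\sigma\in\E_{\text{ext}}$, similarly $\left|\nabla\Q_{\Pi}w\right|^{2}=w_{K}^{2}/\left\Vert \vec{y}_{\sigma}-\vec{x}_{K}\right\Vert ^{2}=w_{K}^{2}/d_{\sigma}^{2}$, since $d_{\sigma}=d_{K,\sigma}$ for external faces.

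The remaining ingredient is the volume of a dual cell. Each $\varSigma_{K,L}$ is the union of the two pyramids $\text{convhull}\left\{ \vec{x}_{K},\sigma\right\}$ and $\text{convhull}\left\{ \vec{x}_{L},\sigma\right\}$, which have disjoint interiors because $\vec{x}_{K}$ and $\vec{x}_{L}$ lie on opposite sides of the plane containing $\sigma$; by the orthogonality property~\ref{enu:admissibility-orthogonality} of Definition~\ref{def:AdmissibleMesh}, their apex heights over the common base $\sigma$ are $d_{K,\sigma}$ and $d_{L,\sigma}$ with $d_{K,\sigma}+d_{L,\sigma}=d_{\sigma}$, hence $m\left(\varSigma_{K,L}\right)=\tfrac{1}{3}m\left(\sigma\right)d_{\sigma}$, and likewise $m\left(\varSigma_{K,\sigma}\right)=\tfrac{1}{3}m\left(\sigma\right)d_{\sigma}$ for $\sigma\in\E_{\text{ext}}$. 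Exactly as in the Remark establishing (\ref{eq:dual_cell_volume_sum_3D}), these pyramids tile $\Omega$, so $\Pi^{*}$ is an essentially disjoint cover of $\Omega$. Combining the gradient values with the volumes and using $\tau_{\sigma}=m\left(\sigma\right)/d_{\sigma}$,
\[
\left\Vert \left|\nabla\Q_{\Pi}w\right|\right\Vert _{\Ls^{2}\left(\Omega\right)}^{2}=\sum_{\sigma=K|L\in\E_{\text{int}}}\frac{m\left(\sigma\right)d_{\sigma}}{3}\cdot\frac{\left(w_{K}-w_{L}\right)^{2}}{d_{\sigma}^{2}}+\sum_{\sigma\in\E_{\text{ext}}}\frac{m\left(\sigma\right)d_{\sigma}}{3}\cdot\frac{w_{K}^{2}}{d_{\sigma}^{2}}=\frac{1}{3}\left\llbracket w\right\rrbracket _{\Pi}^{2},
\]
which is (\ref{eq:norm3}).

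The only delicate point is the geometry of the dual mesh $\Pi^{*}$: that $\varSigma_{K,L}$ genuinely decomposes into the two pyramids with apex heights summing to $d_{\sigma}$ (this is precisely where property~\ref{enu:admissibility-orthogonality} is used) and that the dual cells cover $\Omega$ with pairwise negligible overlap. This is the same pyramid decomposition already invoked for (\ref{eq:dual_cell_volume_sum_3D}), so I would either quote it from there or record it as a short geometric lemma about $\Pi^{*}$ before the computation; everything else — the affine form of $\Q_{\Pi}w$ on dual cells, the gradient formulas, and the final summation — is routine.
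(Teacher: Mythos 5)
Your proposal is correct and follows essentially the same route as the paper: identities (\ref{eq:innerp1})--(\ref{eq:norm2}) from piecewise constancy, and (\ref{eq:norm3}) by observing that $\nabla\Q_{\Pi}w$ is constant on each dual cell with magnitude equal to the difference quotient $\left(w_{K}-w_{L}\right)/d_{\sigma}$ (resp. $w_{K}/d_{\sigma}$ at the boundary), combined with the pyramid volume $\tfrac{1}{3}m\left(\sigma\right)d_{\sigma}$ and the tiling of $\Omega$ by $\Pi^{*}$, exactly as in the Remark yielding (\ref{eq:dual_cell_volume_sum_3D}). The only cosmetic difference is that you compute the integral and reduce it to the discrete seminorm, whereas the paper starts from the seminorm and builds up to the integral.
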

\begin{proof}
(\ref{eq:innerp1}) and (\ref{eq:norm2}) follow directly from the
definition of the respective inner products. To prove (\ref{eq:norm3}),
we recall Definition (\ref{def:InterpolationOperators}), relation
(\ref{eq:dual_cell_volume_sum_3D}), and the notations of Figure \ref{fig:dual-cell}
so that we can write
\begin{align*}
\left\llbracket w\right\rrbracket _{\Pi}^{2} & =\underset{{\sigma\in\E_{K}\cap\E_{\text{int}}\atop \sigma=K|L}}{\sum}\tau_{\sigma}\left(w_{K}-w_{L}\right)^{2}+\underset{{\sigma\in\E_{\text{ext}}\atop \sigma\in\mathcal{E}_{K}}}{\sum}\tau_{\sigma}w_{K}^{2}\\
 & =\underset{{\sigma\in\E_{K}\cap\E_{\text{int}}\atop \sigma=K|L}}{\sum}\frac{m\left(\sigma\right)}{d_{\sigma}}\left(w_{K}-w_{L}\right)^{2}+\underset{{\sigma\in\E_{\text{ext}}\atop \sigma\in\mathcal{E}_{K}}}{\sum}\frac{m\left(\sigma\right)}{d_{\sigma}}w_{K}^{2}\\
 & =\underset{{\sigma\in\E_{K}\cap\E_{\text{int}}\atop \sigma=K|L}}{\sum}m\left(\sigma\right)\left(d_{K,\sigma}+d_{L,\sigma}\right)\left(\frac{w_{K}-w_{L}}{d_{\sigma}}\right)^{2}+\underset{{\sigma\in\E_{\text{ext}}\atop \sigma\in\mathcal{E}_{K}}}{\sum}m\left(\sigma\right)d_{K,\sigma}\left(\frac{w_{K}}{d_{\sigma}}\right)^{2}\\
 & =\underset{{\sigma\in\E_{K}\cap\E_{\text{int}}\atop \sigma=K|L}}{\sum}m\left(\sigma\right)\left(d_{K,\sigma}+d_{L,\sigma}\right)\left|\nabla\mathcal{Q}_{\Pi}w\left(\vec{y}_{\sigma}\right)\right|^{2}\\
 & +\underset{{\sigma\in\E_{\text{ext}}\atop \sigma\in\mathcal{E}_{K}}}{\sum}m\left(\sigma\right)d_{K,\sigma}\left|\nabla\mathcal{Q}_{\Pi}w\left(\vec{y}_{\sigma}\right)\right|^{2}\\
 & =\sum_{K\in\Pi}\sum_{\sigma\in\mathcal{E}_{K}}m\left(\sigma\right)d_{K,\sigma}\left|\nabla\mathcal{Q}_{\Pi}w\left(\vec{y}_{\sigma}\right)\right|^{2}\\
 & =\sum_{K\in\Pi}\sum_{\sigma\in\mathcal{E}_{K}}3\int\limits _{\Sigma_{K,\sigma}}\left|\nabla\mathcal{Q}_{\Pi}w\left(\vec{x}\right)\right|^{2}\dd\vec{x}=3\int\limits _{\Omega}\nabla\mathcal{Q}_{\Pi}w\left(\vec{x}\right)\cdot\nabla\mathcal{Q}_{\Pi}w\left(\vec{x}\right)\dd\vec{x}\\
 & =3\left\Vert \left|\nabla\mathcal{Q}_{\Pi}w\right|\right\Vert _{\Ls^{2}\left(\Omega\right)}^{2}.
\end{align*}
\end{proof}
\begin{lem}
\label{lem:Q-S-estimate}Let $\Pi$ be an admissible mesh. Then there
exists a mesh dependent constant $C_{\Pi}>0$ such that for all $w\in\H^{\Pi}$,
the inequality

\begin{equation}
\left\Vert \Q_{\Pi}w\right\Vert _{\Ls^{2}\left(\Omega\right)}\leq C_{\Pi}\left\Vert \mathcal{S}_{\Pi}w\right\Vert _{\Ls^{2}\left(\Omega\right)}\label{eq:ineqOf2InterpolOps}
\end{equation}
holds.
\end{lem}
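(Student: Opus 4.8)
The plan is to show that on every cell of the dual mesh the piecewise linear interpolant $\Q_{\Pi}w$ is an affine function which is actually a \emph{convex combination} of the two nodal values attached to that cell, hence pointwise controlled by those nodal values; summing the resulting local $\Ls^{2}$ bounds against the pyramid volumes and invoking \eqref{eq:dual_cell_volume_sum_3D} then produces \eqref{eq:ineqOf2InterpolOps} with an explicit mesh-dependent constant. Throughout I work with the half-diamonds $\Sigma_{K,\sigma}=\text{convhull}\{\vec{x}_{K},\sigma\}$, $\sigma\in\E_{K}$, which cover $\Omega$ with overlaps of zero measure (this is exactly the content of \eqref{eq:dual_cell_volume_sum_3D}, together with the fact that each convex cell $K$ is the union of the pyramids with apex $\vec{x}_{K}$ over its faces).

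First I fix an internal face $\sigma=K|L\in\E_{\text{int}}$ and consider $\Q_{\Pi}w$ on $\Sigma_{K,L}$. Writing $t(\vec{x})=\left\Vert \vec{x}_{L}-\vec{x}_{K}\right\Vert ^{-2}(\vec{x}-\vec{x}_{K})\cdot(\vec{x}_{L}-\vec{x}_{K})$, Definition~\ref{def:InterpolationOperators} gives $(\Q_{\Pi}w)(\vec{x})=(1-t(\vec{x}))w_{K}+t(\vec{x})w_{L}$, and $t$ is affine in $\vec{x}$. Evaluating at the extreme points of $\Sigma_{K,L}$: $t(\vec{x}_{K})=0$, $t(\vec{x}_{L})=1$, and, using the orthogonality condition~\ref{enu:admissibility-orthogonality} of Definition~\ref{def:AdmissibleMesh} (the direction $\vec{x}_{L}-\vec{x}_{K}$ is normal to the plane of $\sigma$, and the foot of that perpendicular lies on the segment $\D_{K,L}$), one gets $t(\vec{x})=d_{K,\sigma}/d_{\sigma}\in[0,1]$ for every $\vec{x}\in\sigma$. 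Since $t$ is affine it attains its extrema at the extreme points, so $t(\Sigma_{K,L})\subseteq[0,1]$ and therefore, by convexity of the square, $(\Q_{\Pi}w)^{2}\le(1-t)w_{K}^{2}+t\,w_{L}^{2}\le w_{K}^{2}+w_{L}^{2}$ on $\Sigma_{K,L}$; restricting to the half-diamond $\Sigma_{K,\sigma}$ only shrinks the range of $t$, so the same bound holds there. The analogous computation on a boundary half-diamond $\Sigma_{K,\sigma}$, $\sigma\in\E_{\text{ext}}\cap\E_{K}$, where $\Q_{\Pi}w$ interpolates $w_{K}$ at $\vec{x}_{K}$ and $0$ at $\vec{y}_{\sigma}$, gives $(\Q_{\Pi}w)^{2}\le w_{K}^{2}$. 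Combining with the pyramid volume identity $m(\Sigma_{K,\sigma})=\tfrac{1}{3}m(\sigma)d_{K,\sigma}$ (as in the proof of the Remark following \eqref{eq:dual_cell_volume_sum_3D}) yields, after integrating over each half-diamond, $\int_{\Sigma_{K,\sigma}}(\Q_{\Pi}w)^{2}\le\tfrac{1}{3}m(\sigma)d_{K,\sigma}(w_{K}^{2}+w_{L}^{2})$ for $\sigma=K|L$ internal and $\int_{\Sigma_{K,\sigma}}(\Q_{\Pi}w)^{2}\le\tfrac{1}{3}m(\sigma)d_{K,\sigma}w_{K}^{2}$ for $\sigma$ external.

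Next I sum over $K\in\Pi$ and $\sigma\in\E_{K}$. Because the half-diamonds cover $\Omega$ with null overlaps, $\left\Vert \Q_{\Pi}w\right\Vert _{\Ls^{2}(\Omega)}^{2}=\sum_{K}\sum_{\sigma\in\E_{K}}\int_{\Sigma_{K,\sigma}}(\Q_{\Pi}w)^{2}$, and pairing each internal face with its two half-diamonds (so that $d_{K,\sigma}+d_{L,\sigma}=d_{\sigma}$) and then regrouping by primal cells — each $w_{K}^{2}$ being collected once for every $\sigma\in\E_{K}$ — gives
\[
\left\Vert \Q_{\Pi}w\right\Vert _{\Ls^{2}(\Omega)}^{2}\le\sum_{K\in\Pi}\Bigl(\tfrac{1}{3}\sum_{\sigma\in\E_{K}}m(\sigma)d_{\sigma}\Bigr)w_{K}^{2}.
\]
Finally, since $\Pi$ is a fixed finite mesh with $m(K)>0$ for every $K$, the quantity $C_{\Pi}\equiv\bigl(\max_{K\in\Pi}\tfrac{1}{3m(K)}\sum_{\sigma\in\E_{K}}m(\sigma)d_{\sigma}\bigr)^{1/2}$ is a well-defined positive constant, and the last display becomes $\left\Vert \Q_{\Pi}w\right\Vert _{\Ls^{2}(\Omega)}^{2}\le C_{\Pi}^{2}\sum_{K\in\Pi}m(K)w_{K}^{2}=C_{\Pi}^{2}\left\Vert \mathcal{S}_{\Pi}w\right\Vert _{\Ls^{2}(\Omega)}^{2}$ by the definition of $\mathcal{S}_{\Pi}$ (equivalently, by \eqref{eq:norm2}), which is \eqref{eq:ineqOf2InterpolOps}.

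The step demanding the most care is the pointwise estimate: one must check that the interpolation parameter $t(\vec{x})$ never leaves $[0,1]$ on a dual cell, i.e. that $\Q_{\Pi}w$ is a genuine convex combination rather than an extrapolation. This is precisely where the orthogonality condition~\ref{enu:admissibility-orthogonality} of Definition~\ref{def:AdmissibleMesh} is used, since it forces the foot of the perpendicular dropped from $\sigma$ to lie between $\vec{x}_{K}$ and $\vec{x}_{L}$ on $\D_{K,L}$; everything after that is routine bookkeeping resting on the same pyramid decomposition already employed for \eqref{eq:dual_cell_volume_sum_3D}.
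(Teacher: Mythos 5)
Your proof is correct, but it takes a genuinely different route from the paper's. The paper computes the two local norms $\left\Vert \Q_{\Pi}w\right\Vert _{\Ls^{2}\left(\varSigma_{K,L}\right)}^{2}$ and $\left\Vert \S_{\Pi}w\right\Vert _{\Ls^{2}\left(\varSigma_{K,L}\right)}^{2}$ \emph{exactly} on each dual cell (via an explicit coordinate transformation and integration of the affine interpolant against the cross-sectional areas), removes the cross term $w_{K}w_{L}$ by Young's inequality, and defines $C_{K,L}$ as the larger of the two coefficient ratios; $C_{\Pi}$ is then the maximum of these local ratios over all dual cells. You instead observe that $\Q_{\Pi}w$ is a convex combination of $w_{K}$ and $w_{L}$ on the diamond (the parameter $t$ staying in $[0,1]$ because the foot of the perpendicular lies on $\D_{K,L}$), apply Jensen pointwise to get $\left(\Q_{\Pi}w\right)^{2}\leq w_{K}^{2}+w_{L}^{2}$, integrate against the pyramid volumes $\frac{1}{3}m\left(\sigma\right)d_{K,\sigma}$, and regroup by primal cells to compare directly with $\sum_{K}m\left(K\right)w_{K}^{2}$. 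Your version is more elementary — it sidesteps the explicit quadratic-form computation entirely — at the cost of a cruder constant: the paper's $C_{K,L}$ depends only on the ratio $d_{K,\sigma}/d_{L,\sigma}$ within a single diamond (a point the authors emphasize for refinement stability), whereas your $C_{\Pi}^{2}=\max_{K}\frac{1}{3m\left(K\right)}\sum_{\sigma\in\E_{K}}m\left(\sigma\right)d_{\sigma}$ also involves the neighbours' distances $d_{L,\sigma}$ relative to $m\left(K\right)$. Both are legitimate mesh-dependent constants for the statement as given. The one step you should make fully explicit is that $t\left(\vec{x}\right)=d_{K,\sigma}/d_{\sigma}\in\left[0,1\right]$ on $\sigma$ rests on $d_{\sigma}=d_{K,\sigma}+d_{L,\sigma}$ with both summands nonnegative, i.e.\ on the plane of $\sigma$ separating $\vec{x}_{K}$ from $\vec{x}_{L}$; this follows from convexity of the cells and $\vec{x}_{K}\in\overline{K}$, and is the same tacit assumption the paper makes when integrating $\alpha_{1}$ from $-d_{K,\sigma}$ to $d_{L,\sigma}$.
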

\begin{proof}
Let $\Sigma_{K,L}\in\Pi^{*}$ be a cell of the dual mesh and denote
$\sigma=K|L$. From Definition \ref{def:InterpolationOperators},
we easily get
\begin{equation}
\left\Vert \S_{\Pi}w\right\Vert _{\Ls^{2}\left(\varSigma_{K,L}\right)}^{2}=\frac{1}{3}\widetilde{m}\left(\sigma\right)\left(d_{K,\sigma}w_{K}^{2}+d_{L,\sigma}w_{L}^{2}\right).\label{eq:ConstantInterpol}
\end{equation}
To evaluate $\left\Vert \mathcal{Q}_{\Pi}w\right\Vert _{\Ls^{2}\left(\varSigma_{K,L}\right)}^{2}$,
we represent $\Sigma_{K,L}$ (see Figure \ref{fig:dual-cell}
\begin{figure}
\begin{centering}
\includegraphics[width=90mm]{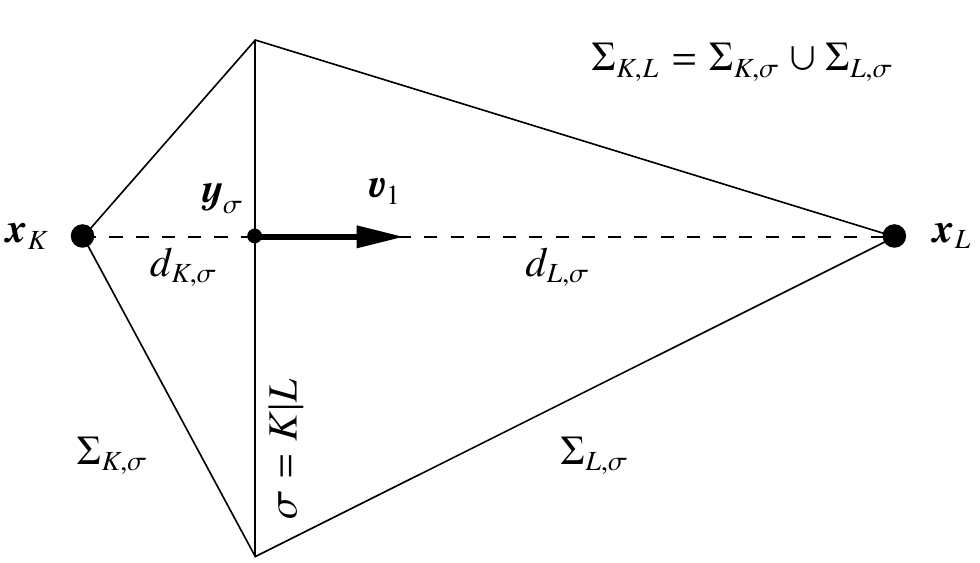}\caption{\label{fig:dual-cell}On the calculation of $\left\Vert \protect\Q_{\Pi}u\right\Vert _{\protect\Ls^{2}\left(\varSigma_{K,L}\right)}^{2}$
on the dual cell $\Sigma_{K,L}$.}
\par\end{centering}
\end{figure}
) in an affine space centered at $\vec{y}_{\sigma}$ with an orthonormal
basis $V=\left(\vec{v}_{1},\vec{v}_{2},\vec{v}_{3}\right)$ where
$\vec{v}_{1}=\frac{\vec{x}_{L}-\vec{x}_{K}}{\left|\vec{x}_{L}-\vec{x}_{K}\right|}$.
The coordinates of a point $\vec{x}\in\Sigma_{K,L}$ in $V$ will
be denoted by $\left(\alpha_{1},\alpha_{2},\alpha_{3}\right)$, i.e.
\[
\vec{x}=\vec{y}_{\sigma}+\sum_{i=1}^{3}\alpha_{i}\vec{v}_{i}.
\]
By means of this transformation, we have
\begin{equation}
\left\Vert \Q_{\Pi}w\right\Vert _{\Ls^{2}\left(\varSigma_{K,L}\right)}^{2}=\int\limits _{-d_{K,\sigma}}^{d_{L,\sigma}}\dd\alpha_{1}\int\limits _{\tilde{S}\left(\alpha_{1}\right)}\dd\left(\alpha_{2},\alpha_{3}\right)\left[\left(\Q_{\Pi}w\right)\left(\vec{y}_{\sigma}+\sum_{i=1}^{3}\alpha_{i}\vec{v}_{i}\right)\right]^{2}\label{eq:linearInterpol}
\end{equation}
where
\[
\tilde{S}\left(\alpha_{1}\right)=\left\{ \left(\alpha_{2},\alpha_{3}\right)\in\mathbb{R}^{2};\vec{y}_{\sigma}+\sum_{i=1}^{3}\alpha_{i}\vec{v}_{i}\in\Sigma_{K,L}\right\} .
\]
In addition, denote by $S\left(\alpha_{1}\right)$ the corresponding
planar cut through $\Sigma_{K,L}$, i.e.
\[
S\left(\alpha_{1}\right)=\left\{ \vec{y}_{\sigma}+\sum_{i=1}^{3}\alpha_{i}\vec{v}_{i};\left(\alpha_{2},\alpha_{3}\right)\in\tilde{S}\left(\alpha_{1}\right)\right\} .
\]
In (\ref{eq:linearInterpol}), Definition \ref{def:InterpolationOperators}
allows to evaluate
\begin{equation}
\left(\Q_{\Pi}w\right)\left(\vec{y}_{\sigma}+\sum_{i=1}^{3}\alpha_{i}\vec{v}_{i}\right)=w_{K}+\frac{\alpha_{1}+d_{K,\sigma}}{d_{L,\sigma}+d_{K,\sigma}}\left(w_{L}-w_{K}\right)\label{eq:Qw-in-dual-cell}
\end{equation}
which only depends on $\alpha_{1}$. Next, the term $\int\limits _{\tilde{S}\left(\alpha_{1}\right)}\dd\left(\alpha_{2},\alpha_{3}\right)$
represents the surface area of $S\left(\alpha_{1}\right)$ which satisfies
\begin{equation}
\int\limits _{\tilde{S}\left(\alpha_{1}\right)}\dd\left(\alpha_{2},\alpha_{3}\right)=\tilde{m}\left(S\left(\alpha_{1}\right)\right)=\begin{cases}
\tilde{m}\left(\sigma\right)\frac{d_{K,\sigma}+\alpha_{1}}{d_{K,\sigma}} & \alpha_{1}\in\left[-d_{K,\sigma},0\right],\\
\tilde{m}\left(\sigma\right)\frac{d_{L,\sigma}-\alpha_{1}}{d_{L,\sigma}} & \alpha_{1}\in\left[0,d_{L,\sigma}\right].
\end{cases}\label{eq:surfaceAreaCut}
\end{equation}
Plugging (\ref{eq:Qw-in-dual-cell}) and (\ref{eq:surfaceAreaCut})
into (\ref{eq:linearInterpol}) and performing the integration%
\begin{lyxgreyedout}
\textbf{EXPLANATION:} See the \texttt{Lemma\_Q-S.mw} MAPLE worksheet%
\end{lyxgreyedout}
{} with respect to $\alpha_{1}$ leads to
\begin{align*}
\left\Vert \Q_{\Pi}w\right\Vert _{\Ls^{2}\left(\varSigma_{K,L}\right)}^{2} & =\frac{1}{12}\frac{\tilde{m}\left(\sigma\right)}{d_{K,\sigma}+d_{L,\sigma}}\bigg[\left(d_{K,\sigma}^{2}+3d_{K,\sigma}d_{L,\sigma}+3d_{L,\sigma}^{2}\right)w_{K}^{2}\\
 & +2\left(d_{K,\sigma}^{2}+3d_{K,\sigma}d_{L,\sigma}+d_{L,\sigma}^{2}\right)w_{K}w_{L}\\
 & +\left(3d_{K,\sigma}^{2}+3d_{K,\sigma}d_{L,\sigma}+d_{L,\sigma}^{2}\right)w_{L}^{2}\bigg].
\end{align*}
By the Young inequality, this can be estimated as
\begin{align*}
\left\Vert \Q_{\Pi}w\right\Vert _{\Ls^{2}\left(\varSigma_{K,L}\right)}^{2} & \leq\frac{1}{6}\frac{\tilde{m}\left(\sigma\right)}{d_{K,\sigma}+d_{L,\sigma}}\bigg[\left(d_{K,\sigma}^{2}+3d_{K,\sigma}d_{L,\sigma}+2d_{L,\sigma}^{2}\right)w_{K}^{2}\\
 & +\left(2d_{K,\sigma}^{2}+3d_{K,\sigma}d_{L,\sigma}+d_{L,\sigma}^{2}\right)w_{L}^{2}\bigg].
\end{align*}
The equality (\ref{eq:ConstantInterpol}) can be rewritten into a
similar form 
\begin{align*}
\left\Vert \mathcal{S}_{\Pi}w\right\Vert _{\Ls^{2}\left(\varSigma_{K,L}\right)}^{2} & =\frac{1}{6}\frac{\tilde{m}\left(\sigma\right)}{d_{K,\sigma}+d_{L,\sigma}}\bigg[\left(2d_{K,\sigma}^{2}+2d_{K,\sigma}d_{L,\sigma}\right)w_{K}^{2}\\
 & +\left(2d_{K,\sigma}d_{L,\sigma}+2d_{L,\sigma}^{2}\right)w_{L}^{2}\bigg].
\end{align*}
Defining
\[
C_{K,L}\equiv\max\left\{ \frac{d_{K,\sigma}^{2}+3d_{K,\sigma}d_{L,\sigma}+2d_{L,\sigma}^{2}}{2d_{K,\sigma}^{2}+2d_{K,\sigma}d_{L,\sigma}},\frac{2d_{K,\sigma}^{2}+3d_{K,\sigma}d_{L,\sigma}+d_{L,\sigma}^{2}}{2d_{K,\sigma}d_{L,\sigma}+2d_{L,\sigma}^{2}}\right\} ,
\]
we observe that
\[
\left\Vert \Q_{\Pi}w\right\Vert _{\Ls^{2}\left(\varSigma_{K,L}\right)}^{2}\leq C_{K,L}\left\Vert \S_{\Pi}w\right\Vert _{\Ls^{2}\left(\varSigma_{K,L}\right)}^{2}.
\]
Note that $C_{K,L}$ is only dependent on the ratio between $d_{K,\sigma}$
and $d_{L,\sigma}$, not on their absolute values. Namely, it is independent
of $w$ and of any mesh refinement as long as the geometry of the
cells remains unchanged%
\begin{lyxgreyedout}
\textbf{TODO:} The note about mesh geometry can possibly be rewritten
or omitted.%
\end{lyxgreyedout}
.

An analogous inequality in the form
\[
\left\Vert \Q_{\Pi}w\right\Vert _{\Ls^{2}\left(\varSigma_{K,\sigma}\right)}^{2}\leq C_{K,\sigma}\left\Vert \S_{\Pi}w\right\Vert _{\Ls^{2}\left(\varSigma_{K,\sigma}\right)}^{2}
\]
can be derived for the dual cells $\Sigma_{K,\sigma}$ at the boundary
of $\Omega$.%
\begin{lyxgreyedout}
\textbf{NOTE:} It is also possible to mirror $\Sigma_{K,\sigma}$
along $\sigma$ to obtain a ``ghost'' cell $L$ with $w_{L}=-w_{K}$
which ensures that $\left(\mathcal{Q}w\right)\left(\vec{y}_{\sigma}\right)=0$.
Then we perform the procedure exactly as shown above and the resulting
norms are just $\frac{1}{2}$ of the obtained values.%
\end{lyxgreyedout}
{} Define
\[
C_{\Pi}\equiv\text{max}\left(\left\{ C_{K,L};K|L\in\E_{\text{int}}\right\} \cup\left\{ C_{K,\sigma};\sigma\in\E_{\text{ext}}\right\} \right).
\]
Summing up the estimates
\[
\left\Vert \Q_{\Pi}w\right\Vert _{\Ls^{2}\left(\varSigma_{K,L}\right)}^{2}\leq C_{\Pi}\left\Vert \S_{\Pi}w\right\Vert _{\Ls^{2}\left(\varSigma_{K,L}\right)}^{2},
\]
\[
\left\Vert \Q_{\Pi}w\right\Vert _{\Ls^{2}\left(\varSigma_{K,\sigma}\right)}^{2}\leq C_{\Pi}\left\Vert \S_{\Pi}w\right\Vert _{\Ls^{2}\left(\varSigma_{K,\sigma}\right)}^{2}
\]
over all cells of the dual mesh concludes the proof.
\end{proof}

\section{Convergence}

The existence, uniqueness of the weak solution and convergence of
the numerical scheme will be shown using a single procedure that relies
on an a priori estimate to ensure boundedness of the respective numerical
solutions. This estimate is independent of mesh refinement. The procedure
uses and expands upon some of the ideas presented in \cite{Benes-Math_comp_aspects_solid}
and \cite{Benes-Asymptotics}. For the sake of simplicity, the homogeneous
Dirichlet boundary conditions 
\begin{align}
\begin{array}{c}
u_{\partial\Omega}=0,\end{array} & \text{ }p_{\partial\Omega}=0\label{eq:ZeroDirichletBC}
\end{align}
are considered, allowing to simplify (\ref{eq:flux-term}) and (\ref{eq:flux-mesh-function})
to
\begin{align*}
F_{K,\sigma}\left(w_{\Pi}\right) & \equiv F_{K,\sigma}\left(w_{\Pi},0\right),\quad F_{K}\left(w_{\Pi}\right)\equiv F_{K}\left(w_{\Pi},0\right),\\
F\left(w_{\Pi}\right) & \equiv F\left(w_{\Pi},0\right)\qquad\forall w\in\H^{\Pi}
\end{align*}

\subsection{Weak Formulation}

Testing the equations (\ref{eq:PhaseIsoHeat-1})--(\ref{eq:PhaseIso-1})
by $\psi$, $\varphi\in\Cs_{0}^{\infty}\left(\mathcal{J}\right)$
and $v$, $q\in\Cs_{0}^{\infty}\left(\Omega\right)$, the weak formulation

\begin{align}
\stackrel[0]{T}{\int}\frac{\dd}{\dd t}\underset{\Omega}{\int}u\left(t,\vec{x}\right)v\left(\vec{x}\right)\dd\vec{x}\psi\left(t\right)\dd t & +\stackrel[0]{T}{\int}\underset{\Omega}{\int}\nabla u\left(t,\vec{x}\right)\cdot\nabla v\left(\vec{x}\right)\dd\vec{x}\psi\left(t\right)\dd t\nonumber \\
 & =\stackrel[0]{T}{\int}\frac{\dd}{\dd t}\underset{\Omega}{\int}Lp\left(t,\vec{x}\right)v\left(\vec{x}\right)\dd\vec{x}\psi\left(t\right)\dd t,\label{eq:weakFormulation-u}\\
\alpha\xi^{2}\stackrel[0]{T}{\int}\frac{\dd}{\dd t}\underset{\Omega}{\int}p\left(t,\vec{x}\right)q\left(\vec{x}\right)\dd\vec{x}\varphi\left(t\right)\dd t & +\xi^{2}\stackrel[0]{T}{\int}\underset{\Omega}{\int}\nabla p\left(t,\vec{x}\right)\cdot\nabla q\left(\vec{x}\right)\dd\vec{x}\varphi\left(t\right)\dd t\nonumber \\
 & =\stackrel[0]{T}{\int}\underset{\Omega}{\int}f\left(u,p,\nabla p;\xi\right)q\left(\vec{x}\right)\dd\vec{x}\varphi\left(t\right)\dd t,\label{eq:weakFormulation-p}
\end{align}
arises, completed by the initial conditions

\[
u\mid_{t=0}=u_{\text{ini}},p\mid_{t=0}=p_{\text{ini}}.\tag{{\ref{eq:initIso-1}}}
\]

\begin{defn}
\label{def:weak-solution}A pair of functions $\left(u,p\right)$,
such that $u\in\Ls^{2}\left(\J;\Hs_{0}^{1}\left(\Omega\right)\right),p\in\Ls^{2}\left(\J;\Hs_{0}^{1}\left(\Omega\right)\right)$
that satisfy (\ref{eq:weakFormulation-u})--(\ref{eq:weakFormulation-p})
with the initial conditions (\ref{eq:initIso-1}) for all $\psi,\varphi\in\Cs_{0}^{\infty}\left(\mathcal{J}\right)$
and all $v,q\in\Cs_{0}^{\infty}\left(\Omega\right)$ is called the
weak solution of the problem (\ref{eq:PhaseIsoHeat-1})--(\ref{eq:initIso-1})
with the boundary conditions (\ref{eq:ZeroDirichletBC}).
\end{defn}

\subsection{A priori Estimates}

Let $\Pi$ be an admissible mesh. Consider the semi-discrete scheme
(\ref{eq:semiDisSchemeHeat}), (\ref{eq:semiDisSchemePhase}). Multiplying
equation (\ref{eq:semiDisSchemeHeat}) by $\dot{u}_{K}\left(t\right)$
and equation (\ref{eq:semiDisSchemePhase}) by $\dot{p}_{K}\left(t\right)$
respectively, summing each of them over all $K\in\Pi$ and using the
definition of $\left(u,v\right)_{\Pi}$ yields

\begin{align}
\left\Vert \dot{u}_{\Pi}\left(t\right)\right\Vert _{\Pi}^{2}+\left(F\left(u_{\Pi}\left(t\right)\right),\dot{u}_{\Pi}\left(t\right)\right)_{\Pi} & =L\left(\dot{p}_{\Pi}\left(t\right),\dot{u}_{\Pi}\left(t\right)\right)_{\Pi},\label{eq:(1a)}\\
\alpha\xi^{2}\left\Vert \dot{p}_{\Pi}\left(t\right)\right\Vert _{\Pi}^{2}+\xi^{2}\left(F\left(p_{\Pi}\left(t\right)\right),\dot{p}_{\Pi}\left(t\right)\right)_{\Pi} & =\overbrace{\underset{K\in\Pi}{\sum}m\left(K\right)f_{0}\left(p_{K}\left(t\right)\right)\dot{p}_{K}\left(t\right)}^{\text{I}.}\label{eq:(2a)}\\
 & -b\beta\xi\left(\varLambda_{\Pi}\left(t\right),\dot{p}_{K}\left(t\right)\right)_{\Pi}.\nonumber 
\end{align}
Using the chain rule and the fact that $f_{0}$ is the derivative
of the double well potential $w_{0}$ (see \cite{Caginalp-Stefan_HeleShaw_PF,Benes-Math_comp_aspects_solid}
and the proof of Lemma \ref{lem:doublewell-pot-estimate}), we obtain

\begin{equation}
f_{0}\left(p\left(t\right)\right)\dot{p}_{K}\left(t\right)=-\frac{\dd w_{0}}{\dd t}\left(p_{K}(t)\right).\label{eq:doubleMinAndF0}
\end{equation}
Rewriting expression I. using (\ref{eq:doubleMinAndF0}), we get

\begin{align}
\underset{K\in\Pi}{\sum}m\left(K\right)f_{0}\left(p_{K}\left(t\right)\right)\dot{p}_{K}\left(t\right) & =\underset{K\in\Pi}{\sum}-\frac{\dd w_{0}}{\dd t}\left(p_{K}(t)\right)m(K).\label{eq:fTow1}
\end{align}
The Schwarz and Young inequalities applied on the right hand side
of (\ref{eq:(1a)}) and (\ref{eq:(2a)}) together with the boundedness
of $\Lambda$ (bounded by the constant $B$) and (\ref{eq:fTow1})
give

\begin{align}
\frac{1}{2}\left\Vert \dot{u}_{\Pi}\left(t\right)\right\Vert _{\Pi}^{2}+\left(F\left(u_{\Pi}\left(t\right)\right),\dot{u}_{\Pi}\left(t\right)\right)_{\Pi} & \leq\frac{L^{2}}{2}\left\Vert \dot{p}_{\Pi}\left(t\right)\right\Vert _{\Pi}^{2},\label{eq:(1b)}\\
\frac{1}{2}\alpha\xi^{2}\left\Vert \dot{p}_{\Pi}\left(t\right)\right\Vert _{\Pi}^{2}+\xi^{2}\left(F\left(p_{\Pi}\left(t\right)\right),\dot{p}_{\Pi}\left(t\right)\right)_{\Pi} & +\frac{\dd}{\dd t}\underset{K\in\Pi}{\sum}w_{0}\left(p_{K}(t)\right)m(K)\label{eq:(2b)}\\
\leq & \frac{\left(b\beta\right)^{2}}{2\alpha}B^{2}m\left(\Omega\right).\nonumber 
\end{align}
Multiplying (\ref{eq:(1b)}) by $\frac{\alpha\xi^{2}}{2\Ls^{2}}$
and adding it to (\ref{eq:(2b)}) results in a single inequality

\begin{align}
\frac{1}{4}\alpha\xi^{2}\left\Vert \dot{p}_{\Pi}\left(t\right)\right\Vert _{\Pi}^{2}+\frac{1}{4}\frac{\alpha\xi^{2}}{2\Ls^{2}}\left\Vert \dot{u}_{\Pi}\left(t\right)\right\Vert _{\Pi}^{2} & +\xi^{2}\overbrace{\left(F\left(p_{\Pi}\left(t\right)\right),\dot{p}_{\Pi}\left(t\right)\right)_{\Pi}}^{\text{I.}}\label{eq:3a}\\
+\frac{\alpha\xi^{2}}{2\Ls^{2}}\overbrace{\left(F\left(u_{\Pi}\left(t\right)\right),\dot{u}_{\Pi}\left(t\right)\right)_{\Pi}}^{\text{II.}}+\frac{\dd}{\dd t}\underset{K\in\Pi}{\sum}w_{0}\left(p_{K}(t)\right)m(K) & \leq\frac{\left(b\beta\right)^{2}}{2\alpha}B^{2}m\left(\Omega\right).\nonumber 
\end{align}

We reformulate the terms I. and II. Under the assumption (\ref{eq:ZeroDirichletBC}),
the term I. can be rewritten by summation over faces $\sigma\in\mathcal{E}$
instead of cells $K\in\Pi$ as follows

\begin{align}
\text{I.}= & \underset{K\in\Pi}{\sum}\left(\underset{\sigma\in\E_{K}}{\sum}F_{K,\sigma}\left(p_{\Pi}\left(t\right)\right)\right)\dot{p}_{\Pi}\left(t\right)\label{eq:reformulation I.}\\
= & \underset{K\in\Pi}{\sum}\underset{{\sigma\in\E_{K}\cap\E_{\text{int}}\atop \sigma=K|L}}{\sum}-\tau_{\sigma}\left(p_{L}\left(t\right)-p_{K}\left(t\right)\right)\dot{p}_{k}\left(t\right)\nonumber \\
+ & \underset{K\in\Pi}{\sum}\underset{\sigma\in\E_{K}\cap\E_{\text{ext}}}{\sum}-\tau_{\sigma}\left(-p_{K}\left(t\right)\right)\dot{p}_{k}\left(t\right)\nonumber \\
= & -\underset{{\sigma\in\E_{\text{int}}\atop \sigma=K\left|L\right.}}{\sum}\tau_{\sigma}\left[\left(p_{L}\left(t\right)-p_{K}\left(t\right)\right)\dot{p}_{K}\left(t\right)+\left(p_{K}\left(t\right)-p_{L}\left(t\right)\right)\dot{p}_{L}\left(t\right)\right]\nonumber \\
+ & \underset{{\sigma\in\E_{\text{ext}}\atop \sigma\in\mathcal{E}_{K}}}{\sum}\tau_{\sigma}p_{K}\left(t\right)\dot{p}_{K}\left(t\right)\nonumber \\
= & \frac{1}{2}\underset{{\sigma\in\E_{\text{int}}\atop \sigma=K\left|L\right.}}{\sum}\tau_{\sigma}\left[\frac{\dd}{\dd t}\left(p_{K}\left(t\right)\right)^{2}-2\frac{\dd}{\dd t}\left(p_{L}\left(t\right)p_{K}\left(t\right)\right)+\frac{\dd}{\dd t}\left(p_{L}\left(t\right)\right)^{2}\right]\nonumber \\
+ & \frac{1}{2}\underset{{\sigma\in\E_{\text{ext}}\atop \sigma\in\mathcal{E}_{K}}}{\sum}\tau_{\sigma}\frac{\dd}{\dd t}\left(p_{K}\left(t\right)\right)^{2}\nonumber \\
= & \frac{1}{2}\frac{\dd}{\dd t}\underset{{\sigma\in\E_{\text{int}}\atop \sigma=K\left|L\right.}}{\sum}\tau_{\sigma}\left[p_{K}\left(t\right)-p_{L}\left(t\right)\right]^{2}+\frac{1}{2}\frac{\dd}{\dd t}\underset{{\sigma\in\E_{\text{ext}}\atop \sigma\in\mathcal{E}_{K}}}{\sum}\tau_{\sigma}\left(p_{K}\left(t\right)\right)^{2}.\nonumber 
\end{align}
An analogous calculation performed on term II. together with Definition
\ref{def:inner-product-on-mesh} give

\begin{equation}
\text{I.}=\frac{1}{2}\frac{\dd}{\dd t}\left\llbracket u_{\Pi}\left(t\right)\right\rrbracket _{\Pi}^{2},\qquad\text{II.}=\frac{1}{2}\frac{\dd}{\dd t}\left\llbracket p_{\Pi}\left(t\right)\right\rrbracket _{\Pi}^{2}.\label{eq:reformulation I_II}
\end{equation}
The identities (\ref{eq:reformulation I_II}) make it possible to
write (\ref{eq:3a}) in the form

\begin{align}
 & \frac{1}{4}\alpha\xi^{2}\left\Vert \dot{p}_{\Pi}\left(t\right)\right\Vert _{\Pi}^{2}+\frac{1}{4}\frac{\alpha\xi^{2}}{2\Ls^{2}}\left\Vert \dot{u_{\Pi}}\left(t\right)\right\Vert _{\Pi}^{2}+\frac{1}{2}\xi^{2}\frac{\dd}{\dd t}\left\llbracket p_{\Pi}\left(t\right)\right\rrbracket _{\Pi}^{2}\nonumber \\
+ & \frac{1}{2}\frac{\alpha\xi^{2}}{2\Ls^{2}}\frac{\dd}{\dd t}\left\llbracket u_{\Pi}\left(t\right)\right\rrbracket _{\Pi}^{2}+\frac{\dd}{\dd t}\underset{K\in\Pi}{\sum}w_{0}\left(p_{K}(t)\right)m(K)\label{eq:3b}\\
\leq & \frac{\left(b\beta\right)^{2}}{2\alpha}B^{2}m\left(\Omega\right).\nonumber 
\end{align}
At this point, the inequality (\ref{eq:3b}) will be used in two different
ways. The first result will be used within the derivation of the second
to obtain the final estimate. First, the nonnegative terms $\left\Vert \dot{p}_{\Pi}\left(t\right)\right\Vert _{\Pi}^{2}$
and $\left\Vert \dot{u_{\Pi}}\left(t\right)\right\Vert _{\Pi}^{2}$
are omitted from the left hand side of (\ref{eq:3b}) and the nonnegative
expression

\begin{equation}
\xi^{2}\frac{1}{2}\left\llbracket p_{\Pi}\left(t\right)\right\rrbracket _{\Pi}^{2}+\frac{1}{2}\frac{\alpha\xi^{2}}{2\Ls^{2}}\left\llbracket u_{\Pi}\left(t\right)\right\rrbracket _{\Pi}^{2}+\underset{K\in\Pi}{\sum}w_{0}\left(p_{K}(t)\right)m(K)\label{eq:posQadd}
\end{equation}
is added to the right hand side of (\ref{eq:3b}), giving rise to

\begin{align}
 & \frac{1}{2}\xi^{2}\frac{\dd}{\dd t}\left\llbracket p_{\Pi}\left(t\right)\right\rrbracket _{\Pi}^{2}+\frac{1}{2}\frac{\alpha\xi^{2}}{2\Ls^{2}}\frac{\dd}{\dd t}\left\llbracket u_{\Pi}\left(t\right)\right\rrbracket _{\Pi}^{2}+\frac{\dd}{\dd t}\underset{K\in\Pi}{\sum}w_{0}\left(p_{K}(t)\right)m(K)\nonumber \\
\leq & \xi^{2}\frac{1}{2}\left\llbracket p_{\Pi}\left(t\right)\right\rrbracket _{\Pi}^{2}+\frac{1}{2}\frac{\alpha\xi^{2}}{2\Ls^{2}}\left\llbracket u_{\Pi}\left(t\right)\right\rrbracket _{\Pi}^{2}+\underset{K\in\Pi}{\sum}w_{0}\left(p_{K}(t)\right)m(K)\nonumber \\
+ & \frac{\left(b\beta\right)^{2}}{2\alpha}m\left(\Omega\right)B^{2}.\label{eq:beforeGron2}
\end{align}
Let $s\in\mathbb{\J}$. Substituting $t$ for $s$, multiplying the
whole inequality by $\e^{-s}$ leads to

\begin{align*}
\frac{\dd}{\dd s}\left[\left(\frac{1}{2}\xi^{2}\left\llbracket p_{\Pi}\left(s\right)\right\rrbracket _{\Pi}^{2}+\frac{\alpha\xi^{2}}{4L}\left\llbracket u_{\Pi}\left(s\right)\right\rrbracket _{\Pi}^{2}+\underset{K\in\Pi}{\sum}w_{0}\left(p_{K}(s)\right)m(K)\right)\e^{-s}\right]\\
\leq\frac{\left(b\beta\right)^{2}}{2\alpha}B^{2}m\left(\Omega\right)\e^{-s}.
\end{align*}
Integrating with respect to $s$ over $\left(0,t\right)$ gives 

\begin{align}
 & \frac{1}{2}\xi^{2}\left\llbracket p_{\Pi}\left(t\right)\right\rrbracket _{\Pi}^{2}+\frac{\alpha\xi^{2}}{4L}\left\llbracket u_{\Pi}\left(t\right)\right\rrbracket _{\Pi}^{2}+\underset{K\in\Pi}{\sum}w_{0}\left(p_{K}(t)\right)m(K)\nonumber \\
\leq & \left(\frac{1}{2}\xi^{2}\left\llbracket p_{\Pi}\left(0\right)\right\rrbracket _{\Pi}^{2}+\frac{\alpha\xi^{2}}{4L}\left\llbracket u_{\Pi}\left(0\right)\right\rrbracket _{\Pi}^{2}\right)\e^{t}+\left(\underset{K\in\Pi}{\sum}w_{0}\left(p_{K}(0)\right)m(K)\right)\e^{t}\nonumber \\
+ & \frac{\left(b\beta\right)^{2}}{2\alpha}B^{2}m\left(\Omega\right)\left(\e^{t}-1\right).\label{eq:gronwallFinEst}
\end{align}
This inequality will be used as part of the next estimate.

Revisiting (\ref{eq:3b}) and adding the nonnegative expression (\ref{eq:posQadd})
to the right hand side yields

\begin{align}
 & \frac{1}{4}\alpha\xi^{2}\left\Vert \dot{p}_{\Pi}\left(t\right)\right\Vert _{\Pi}^{2}+\frac{1}{4}\frac{\alpha\xi^{2}}{2\Ls^{2}}\left\Vert \dot{u}_{\Pi}\left(t\right)\right\Vert _{\Pi}^{2}\label{eq:estimate2Start}\\
+ & \frac{1}{2}\frac{\dd}{\dd t}\left(\xi^{2}\left\llbracket p_{\Pi}\left(t\right)\right\rrbracket _{\Pi}^{2}\right)+\frac{1}{2}\frac{\dd}{\dd t}\left(\frac{\alpha\xi^{2}}{2\Ls^{2}}\left\llbracket u_{\Pi}\left(t\right)\right\rrbracket _{\Pi}^{2}\right)+\frac{\dd}{\dd t}\underset{K\in\Pi}{\sum}w_{0}\left(p_{K}(t)\right)m(K)\nonumber \\
\leq & \frac{\left(b\beta\right)^{2}}{2\alpha}B^{2}m\left(\Omega\right)+\frac{1}{2}\xi^{2}\left\llbracket p_{\Pi}\left(t\right)\right\rrbracket _{\Pi}^{2}\nonumber \\
+ & \frac{1}{2}\frac{\alpha\xi^{2}}{2\Ls^{2}}\left\llbracket u_{\Pi}\left(t\right)\right\rrbracket _{\Pi}^{2}+\underset{K\in\Pi}{\sum}w_{0}\left(p_{K}(t)\right)m(K).\nonumber 
\end{align}
Integrating this estimate with respect to $t$ over $\J=\left(0,T\right)$
gives 

\begin{align}
 & \stackrel[0]{T}{\int}\frac{1}{4}\alpha\xi^{2}\left\Vert \dot{p}_{\Pi}\left(t\right)\right\Vert _{\Pi}^{2}\dd t+\stackrel[0]{T}{\int}\frac{1}{4}\frac{\alpha\xi^{2}}{2\Ls^{2}}\left\Vert \dot{u}_{\Pi}\left(t\right)\right\Vert _{\Pi}^{2}\dd t+\frac{1}{2}\xi^{2}\left\llbracket p_{\Pi}\left(T\right)\right\rrbracket _{\Pi}^{2}\nonumber \\
+ & \frac{1}{2}\frac{\alpha\xi^{2}}{2\Ls^{2}}\left\llbracket u_{\Pi}\left(T\right)\right\rrbracket _{\Pi}^{2}+\left(\underset{K\in\Pi}{\sum}w_{0}\left(p_{K}(T)\right)m(K)\right)\nonumber \\
\leq & \frac{1}{2}\xi^{2}\left\llbracket p_{\Pi}\left(0\right)\right\rrbracket _{\Pi}^{2}+\frac{1}{2}\frac{\alpha\xi^{2}}{2\Ls^{2}}\left\llbracket u_{\Pi}\left(0\right)\right\rrbracket _{\Pi}^{2}\nonumber \\
+ & \stackrel[0]{T}{\int}\frac{1}{2}\xi^{2}\left\llbracket p_{\Pi}\left(t\right)\right\rrbracket _{\Pi}^{2}+\frac{1}{2}\frac{\alpha\xi^{2}}{2\Ls^{2}}\left\llbracket u_{\Pi}\left(t\right)\right\rrbracket _{\Pi}^{2}\dd t+\left(\underset{K\in\Pi}{\sum}w_{0}\left(p_{K}(0)\right)m(K)\right)\nonumber \\
+ & \stackrel[0]{T}{\int}\underset{K\in\Pi}{\sum}w_{0}\left(p_{K}(t)\right)m(K)\dd t+\frac{\left(b\beta\right)^{2}}{2\alpha}B^{2}m\left(\Omega\right)T.\label{eq:2ndEstALmostDone}
\end{align}
The relationship (\ref{eq:gronwallFinEst}) is used to estimate the
integral on the right hand side

\begin{align*}
 & \stackrel[0]{T}{\int}\frac{1}{2}\xi^{2}\left\llbracket p_{\Pi}\left(t\right)\right\rrbracket _{\Pi}^{2}+\frac{1}{2}\frac{\alpha\xi^{2}}{2\Ls^{2}}\left\llbracket u_{\Pi}\left(t\right)\right\rrbracket _{\Pi}^{2}\dd t+\stackrel[0]{T}{\int}\underset{K\in\Pi}{\sum}w_{0}\left(p_{K}(t)\right)m(K)\dd t\\
\leq & \stackrel[0]{T}{\int}\left\{ \frac{1}{2}\xi^{2}\left\llbracket p_{\Pi}\left(0\right)\right\rrbracket _{\Pi}^{2}\e^{t}\right.+\frac{1}{2}\frac{\alpha\xi^{2}}{2\Ls^{2}}\left\llbracket u_{\Pi}\left(0\right)\right\rrbracket _{\Pi}^{2}\e^{t}\\
+ & \left(\underset{K\in\Pi}{\sum}w_{0}\left(p_{K}(0)\right)m(K)\right)\e^{t}+\left.\frac{\left(b\beta\right)^{2}}{2\alpha}B^{2}m\left(\Omega\right)\left(\e^{t}-1\right)\right\} \dd t\\
= & \left\{ \frac{1}{2}\xi^{2}\left\llbracket p_{\Pi}\left(0\right)\right\rrbracket _{\Pi}^{2}\right.+\frac{1}{2}\frac{\alpha\xi^{2}}{2\Ls^{2}}\left\llbracket u_{\Pi}\left(0\right)\right\rrbracket _{\Pi}^{2}\\
+ & \left(\underset{K\in\Pi}{\sum}w_{0}\left(p_{K}(0)\right)m(K)\right)+\left.\frac{\left(b\beta\right)^{2}}{2\alpha}B^{2}m\left(\Omega\right)\right\} \left(\e^{T}-1\right)-\frac{\left(b\beta\right)^{2}}{2\alpha}B^{2}m\left(\Omega\right)T.
\end{align*}
Using this estimate to simplify the right hand side of (\ref{eq:2ndEstALmostDone})
results in

\begin{align}
 & \stackrel[0]{T}{\int}\frac{1}{4}\alpha\xi^{2}\left\Vert \dot{p}_{\Pi}\left(t\right)\right\Vert _{\Pi}^{2}\dd t+\stackrel[0]{T}{\int}\frac{1}{4}\frac{\alpha\xi^{2}}{2\Ls^{2}}\left\Vert \dot{u}_{\Pi}\left(t\right)\right\Vert _{\Pi}^{2}\dd t+\frac{1}{2}\xi^{2}\left\llbracket p_{\Pi}\left(T\right)\right\rrbracket _{\Pi}^{2}\nonumber \\
+ & \frac{1}{2}\frac{\alpha\xi^{2}}{2\Ls^{2}}\left\llbracket u_{\Pi}\left(T\right)\right\rrbracket _{\Pi}^{2}+\left(\underset{K\in\Pi}{\sum}w_{0}\left(p_{K}(T)\right)m(K)\right)\nonumber \\
\leq & \left[\frac{1}{2}\xi^{2}\left\llbracket p_{\Pi}\left(0\right)\right\rrbracket _{\Pi}^{2}+\frac{\alpha\xi^{2}}{2\Ls^{2}}\left\llbracket u_{\Pi}\left(0\right)\right\rrbracket _{\Pi}^{2}+\left(\underset{K\in\Pi}{\sum}w_{0}\left(p_{K}(0)\right)m(K)\right)\right]\e^{T}\nonumber \\
+ & \frac{\left(b\beta\right)^{2}}{2\alpha}B^{2}m\left(\Omega\right)\left(\e^{T}-1\right).\label{eq:2ndEstOneBeforeFin}
\end{align}

\begin{lem}
\label{lem:doublewell-pot-estimate}There exists a constant $c_{w}>0$
such that

\begin{equation}
w_{0}\left(x\right)+c_{w}\geq x^{2}\qquad\forall x\in\R.\label{eq:LemmaOmega}
\end{equation}
\end{lem}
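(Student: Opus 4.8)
The plan is to make $w_0$ explicit and then reduce the claim to the elementary fact that a quartic with positive leading coefficient is bounded below. The relation (\ref{eq:doubleMinAndF0}) forces $w_0'=-f_0$, and since $f_0(p)=p(1-p)\left(p-\tfrac12\right)=-p^3+\tfrac32 p^2-\tfrac12 p$, integrating $-f_0(p)=p^3-\tfrac32 p^2+\tfrac12 p$ gives
\[
w_0(x)=\tfrac14 x^4-\tfrac12 x^3+\tfrac14 x^2=\tfrac14 x^2(x-1)^2 ,
\]
where the constant of integration is chosen so that $w_0\ge 0$ with minima at $x=0$ and $x=1$; any other choice merely shifts the value of $c_w$ and is therefore immaterial.

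Next I would introduce $\phi(x):=w_0(x)-x^2=\tfrac14 x^4-\tfrac12 x^3-\tfrac34 x^2$. This is a real polynomial of even degree with positive leading coefficient $\tfrac14$, so $\phi(x)\to+\infty$ as $|x|\to\infty$; being continuous and coercive on $\R$, it attains a global minimum there. Setting $c_w:=-\min_{x\in\R}\phi(x)$, we obtain $w_0(x)-x^2=\phi(x)\ge -c_w$, i.e. $w_0(x)+c_w\ge x^2$ for every $x\in\R$, which is (\ref{eq:LemmaOmega}).

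It remains to verify $c_w>0$: evaluating at $x=1$ gives $\phi(1)=\tfrac14-\tfrac12-\tfrac34=-1<0$, hence $\min_{x\in\R}\phi(x)\le -1$ and $c_w\ge 1>0$. (If an explicit constant is desired, one can instead locate the minimizer by solving $\phi'(x)=x^3-\tfrac32 x^2-\tfrac32 x=0$, but this is not needed for the statement.) There is no genuine obstacle in this argument — the only points requiring a little care are fixing the correct sign convention for $w_0$ coming from (\ref{eq:doubleMinAndF0}) and observing that the additive constant in $w_0$ can be absorbed into $c_w$, so that coercivity of the quartic $\phi$ is all that is really used.
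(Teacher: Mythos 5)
Your proof is correct and follows essentially the same route as the paper: both make $w_{0}\left(x\right)=\frac{1}{4}x^{2}\left(x-1\right)^{2}$ explicit, consider $w_{0}\left(x\right)-x^{2}$, and take $c_{w}$ to be the negative of its global minimum. The only cosmetic difference is that you justify the existence of that minimum by coercivity of the quartic on all of $\R$, whereas the paper factors $w_{0}\left(x\right)-x^{2}=\frac{1}{4}x^{2}\left(x+1\right)\left(x-3\right)$ to reduce to a minimum over the compact interval $\left[-1,3\right]$; your explicit check that $\phi\left(1\right)=-1<0$, guaranteeing $c_{w}>0$, is a welcome detail the paper leaves implicit.
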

\begin{proof}
\noindent Consider the relationship between $f_{0}$ and $w_{0}$
\[
w_{0}\left(x\right)=-\stackrel[0]{x}{\int}f_{0}\left(s\right)\dd s=\frac{1}{4}x^{2}\left(x-1\right)^{2}.
\]
We rewrite the expression
\begin{equation}
w_{0}\left(x\right)-x^{2}=\frac{1}{4}x^{2}\left[\left(x-1\right)^{2}-4\right]=\frac{1}{4}x^{2}\left(x+1\right)\left(x-3\right),\label{eq:w0-estimate}
\end{equation}
which shows that for any $x>3$ or $x<-1$, (\ref{eq:w0-estimate})
is positive. Hence the inequality (\ref{eq:LemmaOmega}) holds for
any $c_{w}\geq0$ when $x\in\left(-\infty,-1\right)\cup\left(3,+\infty\right)$.
We extend this inequality to any $x\in\mathbb{R}$ by setting
\[
c_{w}\equiv-\underset{x\in\left[-1,3\right]}{\text{min}}\frac{1}{4}x^{2}\left(x+1\right)\left(x-3\right).
\]
\end{proof}
We apply Lemma \ref{lem:doublewell-pot-estimate} by estimating
\begin{align}
\underset{K\in\Pi}{\sum}w_{0}\left(p_{K}(t)\right)m(K) & \geq\underset{K\in\Pi}{\sum}p_{K}^{2}(t)m(K)-\left(c_{w},1\right)_{\Pi}\nonumber \\
 & =\left\Vert \left(p_{\Pi}\left(t\right)\right)\right\Vert _{\Pi}^{2}-\left(c_{w},1\right)_{\Pi}\nonumber \\
 & \geq-\left(c_{w},1\right)_{\Pi}.\label{eq:w0-estimate2}
\end{align}
Using (\ref{eq:w0-estimate2}) and Lemma \ref{lem:discrete-to-L2-relationships},
the estimate (\ref{eq:2ndEstOneBeforeFin}) can be rewritten as

\begin{align}
 & \stackrel[0]{T}{\int}\frac{1}{4}\alpha\xi^{2}\left\Vert \S_{\Pi}\dot{p}_{\Pi}\left(t\right)\right\Vert _{\Ls^{2}\left(\Omega\right)}^{2}\dd t+\stackrel[0]{T}{\int}\frac{1}{4}\frac{\alpha\xi^{2}}{2\Ls^{2}}\left\Vert \S_{\Pi}\dot{u}_{\Pi}\left(t\right)\right\Vert _{\Ls^{2}\left(\Omega\right)}^{2}\dd t\nonumber \\
+ & \frac{3}{2}\xi^{2}\left\Vert \left|\nabla\mathcal{Q}_{\Pi}p_{\Pi}\left(T\right)\right|\right\Vert _{\Ls^{2}\left(\Omega\right)}^{2}+\frac{3}{2}\frac{\alpha\xi^{2}}{2\Ls^{2}}\left\Vert \left|\nabla\mathcal{Q}_{\Pi}u_{\Pi}\right|\right\Vert _{\Ls^{2}\left(\Omega\right)}^{2}\nonumber \\
\leq & \left[\frac{1}{2}\xi^{2}\left\llbracket p_{\Pi}\left(0\right)\right\rrbracket _{\Pi}^{2}+\frac{\alpha\xi^{2}}{2\Ls^{2}}\left\llbracket u_{\Pi}\left(0\right)\right\rrbracket _{\Pi}^{2}+\left(\underset{K\in\Pi}{\sum}w_{0}\left(p_{K}(0)\right)m(K)\right)\right]\e^{T}\label{eq:FINALaprioriEST}\\
+ & \frac{\left(b\beta\right)^{2}}{2\alpha}B^{2}m\left(\Omega\right)\left(\e^{T}-1\right)+\left(c_{w},1\right)_{\Pi}.\nonumber 
\end{align}

\subsection{Convergence of the Numerical Solution}

All the quantities on the left hand side of (\ref{eq:FINALaprioriEST})
are nonnegative, thus the left hand side is bounded from below. The
estimate (\ref{eq:FINALaprioriEST}) shows that the left hand side
is also bounded from above. This implies that all of the expressions
on the left hand side of (\ref{eq:FINALaprioriEST}) are bounded.
Interpreting this boundedness in the context of Bochner spaces, we
get

\begin{align}
\frac{\partial}{\partial t}\S_{\Pi}p_{\Pi} & =\S_{\Pi}\dot{p}_{\Pi}\in\Ls^{2}\left(\J;\Ls^{2}\left(\Omega\right)\right),\label{eq:derInterpo1}\\
\frac{\partial}{\partial t}\S_{\Pi}u_{\Pi} & =\S_{\Pi}\dot{u}_{\Pi}\in\Ls^{2}\left(\J;\Ls^{2}\left(\Omega\right)\right).\label{eq:derInterpo2}
\end{align}
Furthermore, using Lemma \ref{lem:Q-S-estimate}, it also holds that

\begin{align*}
\frac{\partial}{\partial t}\Q_{\Pi}p_{\Pi} & \in\Ls^{2}\left(\J;\Ls^{2}\left(\Omega\right)\right),\\
\frac{\partial}{\partial t}\Q_{\Pi}u_{\Pi} & \in\Ls^{2}\left(\J;\Ls^{2}\left(\Omega\right)\right).
\end{align*}
The finite Bochner norms together with the boundedness of $\Omega$
and $\J$ imply essential boundedness%
\begin{lyxgreyedout}
Ales: Extra proof 1 (??)%
\end{lyxgreyedout}

\begin{align*}
\S_{\Pi}p_{\Pi}\in L^{\infty}\left(\J;\Ls^{2}\left(\Omega\right)\right) & ,\Q_{\Pi}p_{\Pi}\in L^{\infty}\left(\J;\Ls^{2}\left(\Omega\right)\right),\\
\S_{\Pi}u_{\Pi}\in L^{\infty}\left(\J;\Ls^{2}\left(\Omega\right)\right), & \Q_{\Pi}u_{\Pi}\in L^{\infty}\left(\J;\Ls^{2}\left(\Omega\right)\right).
\end{align*}
Finally, the estimate (\ref{eq:FINALaprioriEST}) also gives%
\begin{lyxgreyedout}
\textbf{OMITTED:}
\begin{align*}
\frac{\partial}{\partial x^{i}}\Q_{\Pi}p_{\Pi} & \in L^{\infty}\left(\J;\Ls^{2}\left(\Omega\right)\right),i\in\left\{ 1,2,3\right\} ,\\
\frac{\partial}{\partial x^{i}}\Q_{\Pi}u_{\Pi} & \in L^{\infty}\left(\J;\Ls^{2}\left(\Omega\right)\right),i\in\left\{ 1,2,3\right\} .
\end{align*}
\end{lyxgreyedout}

\begin{align}
\Q_{\Pi}p_{\Pi} & \in L^{\infty}\left(\J;\Hs_{0}^{1}\left(\Omega\right)\right),\label{eq:interpolationPinSobo}\\
\Q_{\Pi}u_{\Pi} & \in L^{\infty}\left(\J;\Hs_{0}^{1}\left(\Omega\right)\right).\label{eq:interpolationUinSobo}
\end{align}
In order to facilitate the subsequent analysis, we introduce the concept
of a normal sequence of meshes.
\begin{defn}
\label{normal-mesh-seq}The norm of an admissible mesh $\Pi$ is defined
as
\[
\left|\Pi\right|\equiv\underset{K\in\Pi}{\text{max}}\,\text{\ensuremath{\inf}}\left\{ r>0\left|\exists x_{0}\in\mathbb{R}^{3}:\forall x\in K:\left|x-x_{0}\right|<r\right.\right\} .
\]
A sequence of admissible meshes $\left(\Pi_{n}\right)$ is called
\emph{normal} if and only if $\underset{n\rightarrow+\infty}{\text{lim}}\left|\Pi_{n}\right|=0$.%
\begin{lyxgreyedout}
This is our ``original'' term based on the normal sequence of partitions
in the construction of Riemann integral.%
\end{lyxgreyedout}
\end{defn}
\begin{lem}
\label{lem:interp-S}Let $w\in\Ls^{2}\left(\Omega\right)$ and $\Pi_{n}$
be a normal sequence of admissible meshes. Then

\begin{equation}
\S_{\Pi_{n}}\P_{\Pi_{n}}w\rightarrow w\text{ in }\Ls^{2}\left(\Omega\right).\label{eq:interpolationToOrg}
\end{equation}
\end{lem}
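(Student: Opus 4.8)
The plan is to argue by density, using that $\S_{\Pi_n}\P_{\Pi_n}$ is nothing but the $\Ls^2(\Omega)$-orthogonal projection onto the space of functions that are constant on each cell of $\Pi_n$, and is therefore a contraction on $\Ls^2(\Omega)$, uniformly in $n$. Recall that $\P_{\Pi}:\Ls^2(\Omega)\to\H^{\Pi}$ assigns to $w$ the mesh function of cell averages, $\left(\P_{\Pi}w\right)_K=\frac{1}{m(K)}\int_K w\,\dd\vec{x}$, so that $\S_{\Pi_n}\P_{\Pi_n}w$ equals this average on each $K$.

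First I would record the uniform contraction estimate. For any $w\in\Ls^2(\Omega)$ and any $K\in\Pi_n$, the Cauchy--Schwarz (Jensen) inequality gives $\left(\frac{1}{m(K)}\int_K w\,\dd\vec{x}\right)^2\le\frac{1}{m(K)}\int_K w^2\,\dd\vec{x}$, hence $\int_K\left(\S_{\Pi_n}\P_{\Pi_n}w\right)^2\dd\vec{x}\le\int_K w^2\,\dd\vec{x}$; summing over $K\in\Pi_n$ yields $\left\Vert\S_{\Pi_n}\P_{\Pi_n}w\right\Vert_{\Ls^2(\Omega)}\le\left\Vert w\right\Vert_{\Ls^2(\Omega)}$ for every $n$. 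Next I would prove the claim for $w\in\Cs(\overline{\Omega})$ (equivalently $w\in\Cs_0^\infty(\Omega)$ extended by zero). Such $w$ is uniformly continuous on the compact set $\overline{\Omega}$, so for $\varepsilon>0$ there is $\delta>0$ with $\left|w(\vec{x})-w(\vec{y})\right|<\varepsilon$ whenever $\left|\vec{x}-\vec{y}\right|<\delta$. Since every cell $K\in\Pi_n$ is contained in a ball of radius $\left|\Pi_n\right|$, its diameter is at most $2\left|\Pi_n\right|$; thus once $\left|\Pi_n\right|<\delta/2$ we have, for $\vec{x}\in K$, $\left|w(\vec{x})-\frac{1}{m(K)}\int_K w\,\dd\vec{y}\right|\le\frac{1}{m(K)}\int_K\left|w(\vec{x})-w(\vec{y})\right|\dd\vec{y}<\varepsilon$, whence $\left\Vert\S_{\Pi_n}\P_{\Pi_n}w-w\right\Vert_{\Ls^2(\Omega)}^2\le\varepsilon^2 m(\Omega)$. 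Because $\left|\Pi_n\right|\to0$ by normality of the mesh sequence, this establishes $\S_{\Pi_n}\P_{\Pi_n}w\to w$ in $\Ls^2(\Omega)$ for continuous $w$.

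Finally I would pass to a general $w\in\Ls^2(\Omega)$ by the standard three-$\varepsilon$ argument: given $\varepsilon>0$, choose $\varphi\in\Cs_0^\infty(\Omega)$ with $\left\Vert w-\varphi\right\Vert_{\Ls^2(\Omega)}<\varepsilon/3$, and write
\[
\left\Vert\S_{\Pi_n}\P_{\Pi_n}w-w\right\Vert_{\Ls^2(\Omega)}\le\left\Vert\S_{\Pi_n}\P_{\Pi_n}(w-\varphi)\right\Vert_{\Ls^2(\Omega)}+\left\Vert\S_{\Pi_n}\P_{\Pi_n}\varphi-\varphi\right\Vert_{\Ls^2(\Omega)}+\left\Vert\varphi-w\right\Vert_{\Ls^2(\Omega)}.
\]
The first term is bounded by $\left\Vert w-\varphi\right\Vert_{\Ls^2(\Omega)}<\varepsilon/3$ via the contraction estimate, the third is $<\varepsilon/3$ by the choice of $\varphi$, and the middle term is $<\varepsilon/3$ for all $n$ large enough by the continuous case; letting $\varepsilon\to0$ finishes the proof. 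There is no serious obstacle here: the only points that require care are identifying $\S_{\Pi_n}\P_{\Pi_n}$ with the cell-averaging projection so the $n$-independent contraction bound is legitimate, and tying the mesh norm $\left|\Pi_n\right|$ to the cell diameters in the uniform-continuity step.
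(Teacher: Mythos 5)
Your argument is correct, but it is worth knowing that the paper does not actually prove this lemma: it is dispatched in a single sentence stating that it ``is a consequence of the definition of the operators $\S_{\Pi_{n}},\P_{\Pi_{n}}$, and Lebesgue integration theory.'' Your write-up therefore supplies the detail the paper omits, and the density-plus-uniform-continuity mechanism you use is exactly the standard one the authors are implicitly invoking. The one point of divergence to flag is your identification of $\P_{\Pi}$ with the cell-averaging projection. The paper never defines $\P_{\Pi}$ formally, but where it is used (in the final theorem, $q_{\Pi_{n}}\equiv\P_{\Pi_{n}}q$ with $q_{K}=q\left(\vec{x}_{K}\right)$, and in the initial data $u_{\Pi_{n}}\left(0\right)=\P_{\Pi_{n}}u_{\text{ini}}$) it is the \emph{point-sampling} operator at the significant points $\vec{x}_{K}$, not the cell average. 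Under that reading the lemma as stated for general $w\in\Ls^{2}\left(\Omega\right)$ is not even well posed (point evaluation is undefined on $\Ls^{2}$), and your uniform contraction bound $\left\Vert \S_{\Pi_{n}}\P_{\Pi_{n}}w\right\Vert _{\Ls^{2}\left(\Omega\right)}\leq\left\Vert w\right\Vert _{\Ls^{2}\left(\Omega\right)}$ would fail, since sampling is not $\Ls^{2}$-bounded uniformly in $n$. Your cell-average interpretation is the natural way to make the statement true as written; alternatively, one can keep the paper's point-sampling $\P_{\Pi}$ and observe that the lemma is only ever applied to continuous functions ($q\in\Cs_{0}^{\infty}\left(\Omega\right)$, $u_{\text{ini}},p_{\text{ini}}\in\Cs^{2}\left(\Omega\right)$), for which your uniform-continuity step alone already gives the conclusion without any density or contraction argument. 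Either way your proof is sound; just make explicit which definition of $\P_{\Pi}$ you are working with, since the contraction step is only legitimate for the averaging one.
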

Lemma \ref{lem:interp-S} is a consequence of the definition of the
operators $\S_{\Pi_{n}},\P_{\Pi_{n}}$, and Lebesgue integration theory
\cite{Kufner-Function-Spaces}.
\begin{lem}
\label{lem:convergence-to-p}Let $u_{\text{ini}},p_{\text{ini}}\in\Cs^{2}\left(\Omega\right)$
and let $\left(\Pi_{n}\right)$ be a normal sequence of admissible
meshes. Then there exists an increasing sequence $\left(k_{n}\right)\subset\mathbb{N}$
and functions $p,u\in\Ls^{2}\left(\J;\Hs_{0}^{1}\left(\Omega\right)\right)$
with the derivatives $\frac{\partial p}{\partial t},\frac{\partial u}{\partial t}\in\Ls^{2}\left(\J;\Ls^{2}\left(\Omega\right)\right)$
such that for $n\rightarrow+\infty$, the following holds:

\begin{align}
\Q_{\Pi_{k_{n}}}p_{\Pi_{k_{n}}} & \longrightarrow p,\label{eq:conv1}\\
\S_{\Pi_{k_{n}}}p_{\Pi_{k_{n}}} & \longrightarrow p,\label{eq:conv2}\\
\S_{\Pi_{k_{n}}}\dot{p}_{\Pi_{k_{n}}} & \overset{w}{\longrightarrow}\frac{\partial p}{\partial t},\label{eq:conv3}\\
\Q_{\Pi_{k_{n}}}u_{\Pi_{k_{n}}} & \longrightarrow u,\label{eq:conv4}\\
\S_{\Pi_{k_{n}}}u_{\Pi_{k_{n}}} & \longrightarrow u,\label{eq:conv5}\\
\Q_{\Pi_{k_{n}}}\dot{u}_{\Pi_{k_{n}}} & \overset{w}{\longrightarrow}\frac{\partial u}{\partial t}\label{eq:conv6}
\end{align}
in $\Ls^{2}\left(\J;\Ls^{2}\left(\Omega\right)\right).$
\end{lem}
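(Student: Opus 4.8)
The plan is to turn the a priori estimate \eqref{eq:FINALaprioriEST} (together with \eqref{eq:gronwallFinEst}) into bounds uniform in $n$, to extract a subsequence by weak compactness combined with an Aubin--Lions--Simon compactness argument, and finally to identify the limits of the time derivatives by passing to the limit in the defining integral identities. First, since $u_{\text{ini}},p_{\text{ini}}\in\Cs^{2}(\Omega)$ and the boundary data vanish, the initial quantities $\llbracket p_{\Pi_{n}}(0)\rrbracket_{\Pi_{n}}^{2}$, $\llbracket u_{\Pi_{n}}(0)\rrbracket_{\Pi_{n}}^{2}$ and $\sum_{K}w_{0}(p_{K}(0))m(K)$ are bounded uniformly in $n$: they converge, respectively, to $\|\nabla p_{\text{ini}}\|_{\Ls^{2}(\Omega)}^{2}$, $\|\nabla u_{\text{ini}}\|_{\Ls^{2}(\Omega)}^{2}$ and $\int_{\Omega}w_{0}(p_{\text{ini}})$, by uniform continuity of $p_{\text{ini}},u_{\text{ini}}$ and their first derivatives together with $|\Pi_{n}|\to0$. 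Hence the right-hand sides of \eqref{eq:FINALaprioriEST} and \eqref{eq:gronwallFinEst} are bounded independently of $n$, the latter for each $t\in\J$. Rewriting the discrete seminorms via Lemma \ref{lem:discrete-to-L2-relationships} and passing between $\S_{\Pi_{n}}$ and $\Q_{\Pi_{n}}$ via Lemma \ref{lem:Q-S-estimate}, one obtains the following bounds, all uniform in $n$: $\Q_{\Pi_{n}}p_{\Pi_{n}},\Q_{\Pi_{n}}u_{\Pi_{n}}$ in $\Ls^{2}(\J;\Hs_{0}^{1}(\Omega))$ (cf. \eqref{eq:interpolationPinSobo}--\eqref{eq:interpolationUinSobo}), $\S_{\Pi_{n}}p_{\Pi_{n}},\S_{\Pi_{n}}u_{\Pi_{n}}$ in $\Ls^{2}(\J;\Ls^{2}(\Omega))$, and $\S_{\Pi_{n}}\dot p_{\Pi_{n}},\S_{\Pi_{n}}\dot u_{\Pi_{n}}$ (hence also $\Q_{\Pi_{n}}\dot p_{\Pi_{n}},\Q_{\Pi_{n}}\dot u_{\Pi_{n}}$) in $\Ls^{2}(\J;\Ls^{2}(\Omega))$; here I use that $\S_{\Pi}$ and $\Q_{\Pi}$ are linear and act pointwise in $t$, so $\partial_{t}(\S_{\Pi}w_{\Pi})=\S_{\Pi}\dot w_{\Pi}$ and $\partial_{t}(\Q_{\Pi}w_{\Pi})=\Q_{\Pi}\dot w_{\Pi}$.

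Next, as $\Ls^{2}(\J;\Hs_{0}^{1}(\Omega))$ is reflexive, a subsequence of $(\Q_{\Pi_{n}}p_{\Pi_{n}})$ converges weakly in it to some $p\in\Ls^{2}(\J;\Hs_{0}^{1}(\Omega))$ (the limit stays in $\Hs_{0}^{1}$ because that space is weakly closed). Since $\partial_{t}\Q_{\Pi_{n}}p_{\Pi_{n}}$ is bounded in $\Ls^{2}(\J;\Ls^{2}(\Omega))$ and the embedding $\Hs_{0}^{1}(\Omega)\hookrightarrow\Ls^{2}(\Omega)$ is compact, the Aubin--Lions--Simon compactness lemma \cite{Theoretical-Num-Analysis} makes $(\Q_{\Pi_{n}}p_{\Pi_{n}})$ relatively compact in $\Ls^{2}(\J;\Ls^{2}(\Omega))$; after refining the subsequence, $\Q_{\Pi_{n}}p_{\Pi_{n}}\to p$ strongly there, which proves \eqref{eq:conv1}. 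Repeating the argument for $u$ and extracting successively yields a single increasing sequence $(k_{n})\subset\N$ along which all six convergences hold, with limits $p,u$ and $\partial_{t}p,\partial_{t}u\in\Ls^{2}(\J;\Ls^{2}(\Omega))$ (the latter identified below).

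To match the two interpolants, I would establish the elementary estimate $\|\Q_{\Pi}w-\S_{\Pi}w\|_{\Ls^{2}(\Omega)}^{2}\le C|\Pi|^{2}\llbracket w\rrbracket_{\Pi}^{2}$ for $w\in\H^{\Pi}$: on a dual cell $\Sigma_{K,L}$ the difference has modulus at most $|w_{K}-w_{L}|$, and $m(\Sigma_{K,L})=\tfrac{1}{3}\widetilde{m}(\sigma)d_{\sigma}=\tfrac{1}{3}d_{\sigma}^{2}\tau_{\sigma}$ with $d_{\sigma}$ bounded by a fixed multiple of $|\Pi|$ (and analogously on a boundary dual cell $\Sigma_{K,\sigma}$), so summation over $\Pi^{*}$ gives the claim. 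Because \eqref{eq:gronwallFinEst} bounds $\llbracket p_{\Pi_{n}}(t)\rrbracket_{\Pi_{n}}^{2}$ uniformly in $n$ and $t$, integrating over $\J$ gives $\|\Q_{\Pi_{k_{n}}}p_{\Pi_{k_{n}}}-\S_{\Pi_{k_{n}}}p_{\Pi_{k_{n}}}\|_{\Ls^{2}(\J;\Ls^{2}(\Omega))}\le C|\Pi_{k_{n}}|\to0$, so \eqref{eq:conv2} follows from \eqref{eq:conv1}, and likewise \eqref{eq:conv5} from \eqref{eq:conv4}. Finally, along a further subsequence $\S_{\Pi_{k_{n}}}\dot p_{\Pi_{k_{n}}}$ has a weak limit $\chi\in\Ls^{2}(\J;\Ls^{2}(\Omega))$; for $\psi\in\Cs_{0}^{\infty}(\J)$ and $v\in\Cs_{0}^{\infty}(\Omega)$, integration by parts in $t$ together with $\partial_{t}\S_{\Pi}p_{\Pi}=\S_{\Pi}\dot p_{\Pi}$ give $\int_{\J}\int_{\Omega}\S_{\Pi_{k_{n}}}\dot p_{\Pi_{k_{n}}}\,v\,\psi\,\dd\vec{x}\,\dd t=-\int_{\J}\int_{\Omega}\S_{\Pi_{k_{n}}}p_{\Pi_{k_{n}}}\,v\,\dot\psi\,\dd\vec{x}\,\dd t$; letting $n\to\infty$ (weak convergence on the left, \eqref{eq:conv2} on the right) identifies $\chi=\partial_{t}p$ in the distributional sense, hence in $\Ls^{2}(\J;\Ls^{2}(\Omega))$, which is \eqref{eq:conv3}. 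The same computation with $\Q$ replacing $\S$, using $\partial_{t}\Q_{\Pi}u_{\Pi}=\Q_{\Pi}\dot u_{\Pi}$ and \eqref{eq:conv4}, gives \eqref{eq:conv6}.

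I expect the main obstacle to be the compactness step: one must verify that $\Q_{\Pi_{n}}\dot u_{\Pi_{n}}$ (equivalently $\partial_{t}\Q_{\Pi_{n}}u_{\Pi_{n}}$) is bounded \emph{uniformly in $n$} in $\Ls^{2}(\J;\Ls^{2}(\Omega))$, which through Lemma \ref{lem:Q-S-estimate} needs the constants $C_{\Pi_{n}}$ to remain bounded along the normal mesh sequence, i.e. a uniform control of the cell-geometry ratios $d_{K,\sigma}/d_{L,\sigma}$; one must also make sure the limits lie in $\Ls^{2}(\J;\Hs_{0}^{1}(\Omega))$ and not merely in $\Ls^{2}(\J;\Hs^{1}(\Omega))$. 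Both points rely essentially on the design of the interpolation operators and on the boundary treatment developed in the interpolation theory above.
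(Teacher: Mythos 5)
Your proposal is correct and follows essentially the same route as the paper: uniform boundedness of the right-hand side of \eqref{eq:FINALaprioriEST} from the regularity of the initial data, weak compactness, an upgrade to strong convergence for the $\Q_{\Pi_{n}}$-interpolants, transfer to the $\S_{\Pi_{n}}$-interpolants by comparing the two operators, and identification of the time derivatives by integrating by parts against $\varphi\in\Cs_{0}^{\infty}\left(\J\right)$ and $\psi\in\Cs_{0}^{\infty}\left(\Omega\right)$. Two of your steps are executed more carefully than in the paper. First, you obtain the strong $\Ls^{2}\left(\J;\Ls^{2}\left(\Omega\right)\right)$ convergence via the Aubin--Lions--Simon lemma, explicitly using the uniform bound on $\partial_{t}\Q_{\Pi_{n}}p_{\Pi_{n}}$; the paper instead invokes a compact embedding $\Ls^{2}\left(\J;\Hs_{0}^{1}\left(\Omega\right)\right)\hookrightarrow\hookrightarrow\Ls^{2}\left(\J;\Ls^{2}\left(\Omega\right)\right)$, which is false as a statement about the Bochner spaces alone (there is no temporal compactness without a time-derivative bound), so your justification is the one that actually closes the argument. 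Second, you prove the comparison $\left\Vert \Q_{\Pi}w-\S_{\Pi}w\right\Vert _{\Ls^{2}\left(\Omega\right)}^{2}\leq C\left|\Pi\right|^{2}\left\llbracket w\right\rrbracket _{\Pi}^{2}$ explicitly and combine it with the uniform-in-$t$ bound from \eqref{eq:gronwallFinEst}, where the paper only asserts that the difference tends to zero ``by the definition of the interpolation operators''. The caveat you raise at the end is genuine and is shared by the paper: passing the $\Ls^{2}$ bound from $\S_{\Pi_{n}}\dot{u}_{\Pi_{n}}$ to $\Q_{\Pi_{n}}\dot{u}_{\Pi_{n}}$ (needed for \eqref{eq:conv6}, and for the Aubin--Lions hypothesis if stated for the $\Q$-interpolant) requires $\sup_{n}C_{\Pi_{n}}<\infty$ in Lemma \ref{lem:Q-S-estimate}, i.e.\ a uniform shape-regularity assumption on the normal mesh sequence, which neither Definition \ref{normal-mesh-seq} nor the paper's proof supplies explicitly.
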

\begin{proof}
For each admissible mesh $\Pi_{n}$, the solutions of the semi-discrete
problem (\ref{eq:semiDisSchemeHeat}) and (\ref{eq:semiDisSchemePhase})
are $u_{\Pi_{n}}$ and $p_{\Pi_{n}}$. Let us recall the right hand
side of (\ref{eq:FINALaprioriEST}) and label the terms as follows:
\begin{align}
\overbrace{\frac{1}{2}\xi^{2}\left\llbracket p_{\Pi_{n}}\left(0\right)\right\rrbracket _{\Pi}^{2}}^{\text{I.}} & +\overbrace{\frac{\alpha\xi^{2}}{2\Ls^{2}}\left\llbracket u_{\Pi_{n}}\left(0\right)\right\rrbracket _{\Pi}^{2}}^{\text{II.}}\label{eq:RHS of final apriori est}\\
\overbrace{\left(\underset{K\in\Pi_{n}}{\sum}w_{0}\left(p_{K}(0)\right)m(K)\right)\e^{T}}^{\text{III.}} & +\overbrace{\frac{\left(b\beta\right)^{2}}{2\alpha}m\left(\Omega\right)B^{2}\left(\e^{T}-1\right)+\left(c_{w},1\right)_{\Pi_{n}}.}^{\text{IV.}}\nonumber 
\end{align}
We show that (\ref{eq:RHS of final apriori est}) is uniformly bounded
with respect to $n$. IV. is just a constant and does not depend on
$n$. The assumption $u_{\text{ini}},p_{\text{ini}}\in\Cs^{2}\left(\Omega\right)$
implies the uniform boundedness of $p_{\Pi}\left(0\right)$ w.r.t.
$n$. Since $w_{0}$ is a continuous function, the whole term III.
is bounded. Terms I. and II. are treated in the same way and so the
procedure will only be shown for term I. Thanks to (\ref{eq:ZeroDirichletBC}),
we have
\begin{align*}
\left\llbracket p_{\Pi_{n}}\left(0\right)\right\rrbracket _{n,\text{ext}}^{2} & =0.
\end{align*}
Taking this into account, we can estimate I. as follows:
\begin{align*}
\left|\left\llbracket p_{\Pi_{n}}\left(0\right)\right\rrbracket _{n,\text{int}}^{2}\right| & \equiv\left|\underset{\sigma\in\E_{n,\text{int}}}{\sum}\frac{m\left(\sigma\right)}{d_{\sigma}}\left[p_{K}\left(0\right)-p_{L}\left(0\right)\right]^{2}\right|\\
\leq\underset{\sigma\in\E_{n,\text{int}}}{\sum}m\left(\sigma\right)d_{\sigma}\left|\left[\frac{p_{K}\left(0\right)-p_{L}\left(0\right)}{d_{\sigma}}\right]^{2}\right| & \leq3m\left(\Omega\right)C_{\partial}^{2}.
\end{align*}
where $C_{\partial}$ is a constant that bounds the difference quotient
$\frac{p_{K}\left(0\right)-p_{L}\left(0\right)}{d_{\sigma}}$ independently
of $n$.

Since all of the terms on the right hand side of (\ref{eq:FINALaprioriEST})
are uniformly bounded in $n$, the left hand side must also be bounded
(the left hand side is nonnegative). This implies
\begin{itemize}
\item $\left(\S_{\Pi_{n}}\dot{p}_{\Pi_{n}}\right),\left(\Q_{\Pi_{n}}\dot{p}_{\Pi_{n}}\right),\left(\S_{\Pi_{n}}\dot{u}_{\Pi_{n}}\right)\text{ and }\left(\Q_{\Pi_{n}}\dot{u}_{\Pi_{n}}\right)$
are uniformly bounded in $\Ls^{2}\left(\J;\Ls^{2}\left(\Omega\right)\right),$
\item $\left(\S_{\Pi_{n}}p_{\Pi_{n}}\right)$ and $\left(\S_{\Pi_{n}}p_{\Pi_{n}}\right)$
are uniformly bounded in $L^{\infty}\left(\J;\Ls^{2}\left(\Omega\right)\right)$,
\item $\left(\Q_{\Pi_{n}}p_{\Pi_{n}}\right)$ and $\left(\Q_{\Pi_{n}}u_{\Pi_{n}}\right)$
are uniformly bounded in $L^{\infty}\left(\J;\Hs_{0}^{1}\left(\Omega\right)\right)$.
\end{itemize}
Since the inclusions $L^{\infty}\left(\J;\Ls^{2}\left(\Omega\right)\right)\subset\Ls^{2}\left(\J;\Ls^{2}\left(\Omega\right)\right)$
and $L^{\infty}\left(\J;\Hs_{0}^{1}\left(\Omega\right)\right)\subset\Ls^{2}\left(\J;\Hs_{0}^{1}\left(\Omega\right)\right)$
hold%
\begin{lyxgreyedout}
\textbf{OMITTED:} ``and $\Ls^{2}\left(\J;\Ls^{2}\left(\Omega\right)\right),\Ls^{2}\left(\J;\Hs_{0}^{1}\left(\Omega\right)\right)$
are both REFLEXIVE! spaces \cite{Kufner-Function-Spaces}''%
\end{lyxgreyedout}
, a weakly convergent subsequence for each of the sequences exists.
Let $\left(h_{n}\right)$ be the sequence for which 
\[
\left(\S_{\Pi_{n}}\dot{p}_{\Pi_{n}}\right),\left(\Q_{\Pi_{n}}\dot{p}_{\Pi_{n}}\right),\left(\S_{\Pi_{n}}p_{\Pi_{n}}\right),\left(\Q_{\Pi_{n}}p_{\Pi_{n}}\right)
\]
 are weakly convergent. From $\left(h_{n}\right)$ we choose $\left(k_{n}\right)$
so that in addition to this 
\[
\left(\S_{\Pi_{n}}\dot{u}_{\Pi_{n}}\right),\left(\Q_{\Pi_{n}}\dot{u}_{\Pi_{n}}\right),\left(\S_{\Pi_{n}}u_{\Pi_{n}}\right),\left(\Q_{\Pi_{n}}u_{\Pi_{n}}\right)
\]
 are weakly convergent. To simplify notation, we will use $\Pi_{n}$
to denote $\Pi_{k_{n}}$ in the following. Altogether, we may write
\begin{itemize}
\item $\left(\S_{\Pi_{n}}\dot{p}_{\Pi_{n}}\right),\left(\Q_{\Pi_{n}}\dot{p}_{\Pi_{n}}\right),\left(\S_{\Pi_{n}}\dot{u}_{\Pi_{n}}\right)\text{ and }\left(\Q_{\Pi_{n}}\dot{u}_{\Pi_{n}}\right)$
are weakly convergent in $\Ls^{2}\left(\J;\Ls^{2}\left(\Omega\right)\right),$
\item $\left(\S_{\Pi_{n}}p_{\Pi_{n}}\right)$ and $\left(\S_{\Pi_{n}}u_{\Pi_{n}}\right)$
are weakly convergent in $\Ls^{2}\left(\J;\Ls^{2}\left(\Omega\right)\right)$,
\item $\left(\Q_{\Pi_{n}}p_{\Pi_{n}}\right)$ and $\left(\Q_{\Pi_{n}}u_{\Pi_{n}}\right)$
are weakly convergent in $\Ls^{2}\left(\J;\Hs_{0}^{1}\left(\Omega\right)\right)$.
\end{itemize}
Using the compact embedding results from \cite{Kufner-Function-Spaces,Schwartz-distributions},
we get

\[
\Ls^{2}\left(\J;\Hs_{0}^{1}\left(\Omega\right)\right)\hookrightarrow\hookrightarrow\Ls^{2}\left(\J;\Ls^{2}\left(\Omega\right)\right).
\]
Hence, the strong convergence of $\left(\Q_{\Pi_{n}}p_{\Pi_{n}}\right)$
and $\left(\Q_{\Pi_{n}}u_{\Pi_{n}}\right)$ in $\Ls^{2}\left(\J;\Ls^{2}\left(\Omega\right)\right)$
follows. The definition of the interpolation operators gives
\begin{align*}
\left\Vert \S_{\Pi_{n}}p_{\Pi_{n}}-\Q_{\Pi_{n}}p_{\Pi_{n}}\right\Vert _{\Ls^{2}\left(\Omega\right)} & \overset{n\rightarrow+\infty}{\longrightarrow}0,\\
\left\Vert \S_{\Pi_{n}}u_{\Pi_{n}}-\Q_{\Pi_{n}}u_{\Pi_{n}}\right\Vert _{\Ls^{2}\left(\Omega\right)} & \overset{n\rightarrow+\infty}{\longrightarrow}0.
\end{align*}
We conclude that $\left(\S_{\Pi_{n}}p_{\Pi_{n}}\right)$ and $\left(\S_{\Pi_{n}}u_{\Pi_{n}}\right)$,
converge strongly to the same limit as $\left(\Q_{\Pi_{n}}p_{\Pi_{n}}\right)$
and $\left(\Q_{\Pi_{n}}u_{\Pi_{n}}\right)$ respectively, we will
denote these limits $p$ and $u$. Since the space $\Ls^{2}\left(\J;\Hs_{0}^{1}\left(\Omega\right)\right)$
is complete and (\ref{eq:interpolationPinSobo}), (\ref{eq:interpolationUinSobo})
we can conclude that $u,p\in\Ls^{2}\left(\J;\Hs_{0}^{1}\left(\Omega\right)\right).$
This gives the statements (\ref{eq:conv1}), (\ref{eq:conv2}), (\ref{eq:conv4})
and (\ref{eq:conv5}).

To prove the convergence of $\left(\S_{\Pi_{n}}\dot{p}_{\Pi_{n}}\right)$
and $\left(\S_{\Pi_{n}}\dot{u}_{\Pi_{n}}\right)$, we first use the
relationships (\ref{eq:derInterpo1}) and (\ref{eq:derInterpo2})
and the completeness of $\Ls^{2}\left(\J;\Ls^{2}\left(\Omega\right)\right)$
to see that 

\begin{equation}
\eta_{p},\eta_{u}\in\Ls^{2}\left(\J;\Ls^{2}\left(\Omega\right)\right),\label{eq:limitCOmpleate}
\end{equation}
where $\eta_{p}$ and $\eta_{u}$ are the weak limits of $\left(\S_{\Pi_{n}}\dot{p}_{\Pi_{n}}\right)$
and $\left(\S_{\Pi_{n}}\dot{u}_{\Pi_{n}}\right)$, respectively. Assume
that $\varphi\in\Cs_{0}^{\infty}\left(\J\right)$ and $\psi\in\Cs_{0}^{\infty}\left(\Omega\right)$
are arbitrary. Then

\begin{align*}
\stackrel[0]{T}{\int}\left\langle \left(\S_{\Pi_{n}}\dot{p}_{\Pi_{n}}\right),\psi\right\rangle \left(t\right)\varphi\left(t\right)\dd t & =-\stackrel[0]{T}{\int}\left\langle \left(\S_{\Pi_{n}}p_{\Pi_{n}}\right),\psi\right\rangle \left(t\right)\dot{\varphi}\left(t\right)\dd t\\
\overset{n\rightarrow+\infty}{\rightarrow}-\stackrel[0]{T}{\int}\left\langle p,\psi\right\rangle \left(t\right)\dot{\varphi}\left(t\right)\dd t & =\stackrel[0]{T}{\int}\left\langle \dot{p},\psi\right\rangle \left(t\right)\varphi\left(t\right)\dd t\\
\stackrel[0]{T}{\int}\left\langle \left(\S_{\Pi_{n}}\dot{p}_{\Pi_{n}}\right),\psi\right\rangle \left(t\right)\varphi\left(t\right)\dd t & \overset{n\rightarrow+\infty}{\rightarrow}\stackrel[0]{T}{\int}\left\langle \eta_{p},\psi\right\rangle \left(t\right)\varphi\left(t\right)\dd t
\end{align*}
The calculation above is possible due to (\ref{eq:limitCOmpleate})
and shows that $\left(\S_{\Pi_{n}}\dot{p}_{\Pi_{n}}\right)$ weakly
converges to $\eta_{p}=\frac{\partial p}{\partial t}$ in $\Ls^{2}\left(\J;\Ls^{2}\left(\Omega\right)\right).$
A similar procedure may be used to conclude that $\left(\S_{\Pi_{n}}\dot{u}_{\Pi_{n}}\right)$
converges weakly to $\eta_{u}=\frac{\partial u}{\partial t}$ in $\Ls^{2}\left(\J;\Ls^{2}\left(\Omega\right)\right).$
\end{proof}

\begin{lem}
\label{lem:gradient-convergence}Let $q\in\Cs_{0}^{\infty}\left(\varOmega\right)$
and $\Pi_{n}$ be a normal sequence of admissible meshes. Then (for
a suitable subsequence $\Pi_{k_{n}}$ denoted again as $\Pi_{n}$)

\begin{align}
\underset{K\in\Pi_{n}}{\sum}q\left(\vec{x}_{K}\right)F_{K}\left(p_{\Pi_{n}}\left(t\right)\right) & \overset{n\to+\infty}{\longrightarrow}\underset{\Omega}{\int}\nabla p\left(t,\vec{x}\right)\cdot\nabla q\left(\vec{x}\right)\dd\vec{x},\label{eq:flux-convergence-p}\\
\underset{K\in\Pi_{n}}{\sum}q\left(\vec{x}_{K}\right)F_{K}\left(u_{\Pi_{n}}\left(t\right)\right) & \overset{n\to+\infty}{\longrightarrow}\underset{\Omega}{\int}\nabla u\left(t,\vec{x}\right)\cdot\nabla q\left(\vec{x}\right)\dd\vec{x}\label{eq:flux-convergence-u}
\end{align}
in $\Ls^{2}\left(\mathcal{J}\right)$.
\end{lem}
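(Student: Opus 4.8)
The plan is to convert the discrete functional $t\mapsto\sum_{K}q(\vec{x}_{K})F_{K}(p_{\Pi_{n}}(t))$ into the spatial weak form $-\int_{\Omega}\S_{\Pi_{n}}p_{\Pi_{n}}(t)\,\Delta q\,\dd\vec{x}$ up to a consistency defect that vanishes with $\left|\Pi_{n}\right|$, and then to pass to the limit using only the strong convergence $\S_{\Pi_{n}}p_{\Pi_{n}}\to p$ supplied by Lemma~\ref{lem:convergence-to-p}. Here $p,u$ and the subsequence $\left(\Pi_{n}\right)$ are those from Lemma~\ref{lem:convergence-to-p}; I carry out the argument for $p$ only, the one for $u$ being identical with (\ref{eq:conv2}) replaced by (\ref{eq:conv5}).

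First I would fix $n$ so large that $q$, and hence $\nabla q$, vanishes at the significant point $\vec{x}_{K}$ of every cell $K$ touching $\partial\Omega$ and on every face $\sigma\in\E_{\text{ext}}$ (possible since $\mathrm{supp}\,q$ is compact in $\Omega$ and $\left|\Pi_{n}\right|\to0$). Regrouping $\sum_{K}q(\vec{x}_{K})F_{K}(p_{\Pi_{n}}(t))$ over faces via the definition (\ref{eq:flux-term}), using $F_{K,\sigma}=-F_{L,\sigma}$ on interior faces and discarding the now-vanishing boundary contributions, I obtain the discrete integration-by-parts identity
\[
\sum_{K\in\Pi_{n}}q(\vec{x}_{K})F_{K}(p_{\Pi_{n}}(t))=\sum_{\substack{\sigma\in\E_{\text{int}}\\\sigma=K|L}}\tau_{\sigma}\big(p_{K}(t)-p_{L}(t)\big)\big(q(\vec{x}_{K})-q(\vec{x}_{L})\big).
\]
Applying Green's formula to $q$ on each cell, $\int_{K}\Delta q=\sum_{\sigma\in\E_{K}}\int_{\sigma}\nabla q\cdot\vec{n}_{K,\sigma}\,\dd S$, and regrouping in the same way gives
\[
-\int_{\Omega}\S_{\Pi_{n}}p_{\Pi_{n}}(t)\,\Delta q\,\dd\vec{x}=\sum_{\substack{\sigma\in\E_{\text{int}}\\\sigma=K|L}}\big(p_{K}(t)-p_{L}(t)\big)\Big(-\int_{\sigma}\nabla q\cdot\vec{n}_{K,\sigma}\,\dd S\Big).
\]
Subtracting, the defect is $\sum_{\sigma}\big(p_{K}(t)-p_{L}(t)\big)R_{\sigma}$ with $R_{\sigma}=\tau_{\sigma}(q_{K}-q_{L})+\int_{\sigma}\nabla q\cdot\vec{n}_{K,\sigma}\,\dd S$. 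Thanks to the orthogonality condition~\ref{enu:admissibility-orthogonality}, $\tau_{\sigma}(q_{K}-q_{L})$ equals $-m(\sigma)$ times the average of $\nabla q\cdot\vec{n}_{K,\sigma}$ along the segment $\D_{K,L}$, whereas $\int_{\sigma}\nabla q\cdot\vec{n}_{K,\sigma}\,\dd S$ is $m(\sigma)$ times its average over $\sigma$; since $\D_{K,L}$ and $\sigma$ both lie in the dual cell $\Sigma_{K,L}$, whose diameter is $O(\left|\Pi_{n}\right|)$, the mean value theorem yields $\left|R_{\sigma}\right|\le C\left|\Pi_{n}\right|\,\|\nabla^{2}q\|_{\infty}\,m(\sigma)$.

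The step I expect to be the crux is estimating this defect by a quantity the a priori analysis actually controls. By the discrete Cauchy--Schwarz inequality and the identity $\sum_{\sigma\in\E}m(\sigma)d_{\sigma}=3m(\Omega)$ of (\ref{eq:dual_cell_volume_sum_3D}),
\[
\Big|\sum_{\sigma}(p_{K}(t)-p_{L}(t))R_{\sigma}\Big|\le C\left|\Pi_{n}\right|\,\|\nabla^{2}q\|_{\infty}\Big(\sum_{\sigma}\tau_{\sigma}(p_{K}(t)-p_{L}(t))^{2}\Big)^{1/2}\big(3m(\Omega)\big)^{1/2}\le C'\left|\Pi_{n}\right|\,\|\nabla^{2}q\|_{\infty}\,\left\llbracket p_{\Pi_{n}}(t)\right\rrbracket _{\Pi_{n}},
\]
since the first sum is the interior part of $\left\llbracket p_{\Pi_{n}}(t)\right\rrbracket _{\Pi_{n}}^{2}$. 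Estimate (\ref{eq:gronwallFinEst}), whose right-hand side is bounded uniformly in $n$ (as argued in the proof of Lemma~\ref{lem:convergence-to-p}), makes $\sup_{t\in\J}\left\llbracket p_{\Pi_{n}}(t)\right\rrbracket _{\Pi_{n}}$ bounded independently of $n$, so the defect tends to $0$ in $\Ls^{2}(\J)$ (in fact uniformly on $\J$) as $\left|\Pi_{n}\right|\to0$. To finish, I note that $q\in\Cs_{0}^{\infty}(\Omega)$ gives $-\int_{\Omega}p(t)\Delta q\,\dd\vec{x}=\int_{\Omega}\nabla p(t,\vec{x})\cdot\nabla q(\vec{x})\,\dd\vec{x}$ for almost every $t\in\J$, and that $\big|\int_{\Omega}(\S_{\Pi_{n}}p_{\Pi_{n}}(t)-p(t))\Delta q\,\dd\vec{x}\big|\le\|\S_{\Pi_{n}}p_{\Pi_{n}}(t)-p(t)\|_{\Ls^{2}(\Omega)}\|\Delta q\|_{\Ls^{2}(\Omega)}$, whose $\Ls^{2}(\J)$-norm vanishes by (\ref{eq:conv2}). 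The triangle inequality then gives (\ref{eq:flux-convergence-p}), and (\ref{eq:flux-convergence-u}) follows verbatim from (\ref{eq:conv5}) and the $u$-part of (\ref{eq:gronwallFinEst}).
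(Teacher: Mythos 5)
Your proposal is correct and follows essentially the same route as the paper: both compare the discrete sum $T_{n}$ with the intermediate term $T_{n}'=-\int_{\Omega}\S_{\Pi_{n}}p_{\Pi_{n}}(t)\,\Delta q\,\dd\vec{x}$, bound the consistency defect via Cauchy--Schwarz, the Taylor/mean-value bound on $R_{\sigma}$, and the identity (\ref{eq:dual_cell_volume_sum_3D}), and then control $\left\llbracket p_{\Pi_{n}}(t)\right\rrbracket _{\Pi_{n}}$ uniformly in $n$ by the a priori estimate before passing to the limit with (\ref{eq:conv2}). The only (harmless) deviation is that you discard the exterior-face contributions by taking $n$ large enough that $q$ vanishes on boundary cells, whereas the paper keeps those terms and absorbs them into the same estimate.
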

\begin{proof}
We follow the ideas in \cite[Theorem 9.1]{FVM-Eymard}%
\begin{lyxgreyedout}
\textbf{EXPLANATION:} See revised PDF version page 47%
\end{lyxgreyedout}
{} and omit the integration with respect to $t\in\left(0,T\right)$
for better readability. The left hand side of (\ref{eq:flux-convergence-p})
can be rewritten as
\begin{equation}
\begin{aligned}T_{n}\equiv & \underset{K\in\Pi_{n}}{\sum}q\left(\vec{x}_{K}\right)F_{K}\left(p_{\Pi_{n}}\left(t\right)\right)\\
= & \underset{{\sigma\in\E_{n,\text{int}}\atop \sigma=K\left|L\right.}}{\sum}\tau_{\sigma}\left(p_{L}\left(t\right)-p_{K}\left(t\right)\right)\left(q\left(\vec{x}_{L}\right)-q\left(\vec{x}_{K}\right)\right)\\
+ & \underset{{\sigma\in\E_{n,\text{ext}}\atop \sigma\in\mathcal{E}_{K}}}{\sum}\tau_{\sigma}p_{K}\left(t\right)q\left(\vec{x}_{K}\right).
\end{aligned}
\label{eq:grad-discrete-term}
\end{equation}
In addition, consider the term
\begin{equation}
T_{n}'\equiv-\int\limits _{\Omega}\left(\mathcal{S}_{\Pi_{n}}p_{\Pi_{n}}\left(t\right)\right)\left(\vec{x}\right)\Delta q\left(\vec{x}\right)\dd\vec{x}\label{eq:grad-intermediate-term}
\end{equation}
which thanks to (\ref{eq:conv2}) converges to 
\[
-\int\limits _{\Omega}p\left(\vec{x}\right)\Delta q\left(\vec{x}\right)\dd\vec{x}=\int\limits _{\Omega}\nabla p\left(\vec{x}\right)\cdot\nabla q\left(\vec{x}\right)\dd\vec{x}
\]
as $n\to\infty$. First, we rewrite (\ref{eq:grad-intermediate-term})
as
\begin{align*}
T_{n}' & =\sum_{K\in\Pi_{n}}-p_{K}\left(t\right)\int\limits _{K}\Delta q\left(\vec{x}\right)\dd\vec{x}\\
 & =\sum_{K\in\Pi_{n}}-p_{K}\left(t\right)\sum_{\sigma\in\mathcal{E}_{K}}\int\limits _{\sigma}\nabla q\left(\vec{x}\right)\cdot\vec{n}_{K,\sigma}\dd S\\
 & =\underset{{\sigma\in\E_{n,\text{int}}\atop \sigma=K\left|L\right.}}{\sum}\left(p_{L}\left(t\right)-p_{K}\left(t\right)\right)\int\limits _{\sigma}\nabla q\left(\vec{x}\right)\cdot\vec{n}_{K,\sigma}\dd S.
\end{align*}
\begin{lyxgreyedout}
\textbf{REMARK:} Formally speaking, there is also the term 
\[
\underset{{\sigma\in\E_{n,\text{ext}}\atop \sigma\in\mathcal{E}_{K}}}{\sum}-p_{K}\left(t\right)\int\limits _{\sigma}\nabla q\left(\vec{x}\right)\cdot\vec{n}_{K,\sigma}\dd S.
\]
 in $T_{n}'$, which is however equal to zero since $q\in\Cs_{0}^{\infty}\left(\Omega\right)$
and thus not only $\left.q\right|_{\partial\Omega}=0$, but also $\left.\nabla q\right|_{\partial\Omega}=\vec{0}$.%
\end{lyxgreyedout}
The difference between (\ref{eq:grad-discrete-term}) and (\ref{eq:grad-intermediate-term})
can therefore be written as
\[
T_{n}-T_{n}'=\underset{{\sigma\in\E_{n,\text{int}}\atop \sigma=K\left|L\right.}}{\sum}m\left(\sigma\right)\left(p_{L}\left(t\right)-p_{K}\left(t\right)\right)R_{K,\sigma}+\underset{{\sigma\in\E_{n,\text{ext}}\atop \sigma\in\mathcal{E}_{K}}}{\sum}m\left(\sigma\right)p_{K}\left(t\right)R_{K,\sigma}
\]
where
\[
R_{K,\sigma}=\begin{cases}
\frac{q\left(\vec{x}_{L}\right)-q\left(\vec{x}_{K}\right)}{d_{\sigma}}-\int\limits _{\sigma}\nabla q\left(\vec{x}\right)\cdot\vec{n}_{K,\sigma}\dd S & \sigma=K|L\in\mathcal{E}_{n,\text{int}},\\
\frac{-q\left(\vec{x}_{K}\right)}{d_{\sigma}} & \sigma\in\mathcal{E}_{n,\text{ext}},\ \sigma\in\mathcal{E}_{K}.
\end{cases}
\]
Using Hölder's inequality leads to
\begin{align*}
\left|T_{n}-T_{n}'\right| & \leq\left(\underset{{\sigma\in\E_{n,\text{int}}\atop \sigma=K\left|L\right.}}{\sum}\tau_{\sigma}\left(p_{L}\left(t\right)-p_{K}\left(t\right)\right)^{2}\underset{{\sigma\in\E_{n,\text{int}}\atop \sigma=K\left|L\right.}}{\sum}m\left(\sigma\right)d_{\sigma}\left|R_{K,\sigma}\right|^{2}\right)^{1/2}\\
 & +\left(\underset{{\sigma\in\E_{n,\text{int}}\atop \sigma=K\left|L\right.}}{\sum}\tau_{\sigma}p_{K}\left(t\right)^{2}\underset{{\sigma\in\E_{n,\text{int}}\atop \sigma=K\left|L\right.}}{\sum}m\left(\sigma\right)d_{\sigma}\left|R_{K,\sigma}\right|^{2}\right)^{1/2}
\end{align*}
The regularity of $q$ allows to use the Taylor expansion to show
that
\[
\left|R_{K,\sigma}\right|\leq Cm\left(\sigma\right)
\]
for some $C>0$. By using $m\left(\sigma\right)\leq\left|\Pi_{n}\right|$
for each $\sigma\in\mathcal{E}_{n}$, the relation (\ref{eq:dual_cell_volume_sum_3D})
and Definition \ref{def:inner-product-on-mesh}, we further estimate
\begin{align*}
\left|T_{n}-T_{n}'\right| & \leq2C\left|\Pi_{n}\right|\sqrt{3m\left(\Omega\right)}\left\llbracket p_{\Pi_{n}}\left(t\right)\right\rrbracket _{\Pi}\\
 & =6C\left|\Pi_{n}\right|\sqrt{m\left(\Omega\right)}\left\Vert \left|\nabla\mathcal{Q}_{\Pi}w\right|\right\Vert _{\Ls^{2}\left(\Omega\right)},
\end{align*}
\begin{lyxgreyedout}
\textbf{EXPLANATION:}
\begin{align*}
 & \left|T_{n}-T_{n}'\right|\\
\leq & C\left|\Pi_{n}\right|\sqrt{3m\left(\Omega\right)}\left(\left(\underset{{\sigma\in\E_{n,\text{int}}\atop \sigma=K\left|L\right.}}{\sum}\tau_{\sigma}\left(p_{L}\left(t\right)-p_{K}\left(t\right)\right)^{2}\right)^{1/2}+\left(\underset{{\sigma\in\E_{n,\text{int}}\atop \sigma=K\left|L\right.}}{\sum}\tau_{\sigma}p_{K}\left(t\right)^{2}\right)^{1/2}\right)\\
\leq & 2C\left|\Pi_{n}\right|\sqrt{3m\left(\Omega\right)}\left(\underset{{\sigma\in\E_{n,\text{int}}\atop \sigma=K\left|L\right.}}{\sum}\tau_{\sigma}\left(p_{L}\left(t\right)-p_{K}\left(t\right)\right)^{2}+\underset{{\sigma\in\E_{n,\text{int}}\atop \sigma=K\left|L\right.}}{\sum}\tau_{\sigma}p_{K}\left(t\right)^{2}\right)^{1/2}\\
= & 2C\left|\Pi_{n}\right|\sqrt{3m\left(\Omega\right)}\left\llbracket p_{\Pi_{n}}\left(t\right)\right\rrbracket _{\Pi}
\end{align*}
\end{lyxgreyedout}
{} where the last equality is by Lemma \ref{lem:discrete-to-L2-relationships}.
The uniform boundedness of $\left\Vert \left|\nabla\mathcal{Q}_{\Pi}w\right|\right\Vert _{\Ls^{2}\left(\Omega\right)}$
given by the a priori estimate (\ref{eq:FINALaprioriEST}) and the
proof of Lemma \ref{lem:convergence-to-p} allows us to conclude that
\[
\lim_{n\to\infty}\left|T_{n}-T_{n}'\right|=0,
\]
which gives the first part of the statement, i.e. (\ref{eq:flux-convergence-p}).
The proof of (\ref{eq:flux-convergence-u}) is analogous.
\end{proof}
\begin{thm}
Let $u_{\text{ini}},p_{\text{ini}}\in\Cs^{2}\left(\Omega\right)$
and let $\left(\Pi_{n}\right)$ be a normal sequence of admissible
meshes. Then the sequence $\left(\S_{\Pi_{n}}u_{\Pi_{n}},\S_{\Pi_{n}}p_{\Pi n}\right)$
given by solutions of the semidiscrete scheme (\ref{eq:semiDisSchemeHeat}),
(\ref{eq:semiDisSchemePhase}) converges (as $n\rightarrow\infty$)
to the unique weak solution $\left(u,p\right)$ given by Definition
\ref{def:weak-solution} in $\Ls^{2}\left(\J;\Ls^{2}\left(\Omega\right)\right)$.
\end{thm}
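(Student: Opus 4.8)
The plan is to pass to the limit in the semi-discrete scheme (\ref{eq:semiDisSchemeHeat})--(\ref{eq:semiDisSchemePhase}) along the subsequence produced by Lemma~\ref{lem:convergence-to-p}, to check that the limit $\left(u,p\right)$ satisfies the weak formulation of Definition~\ref{def:weak-solution}, to prove that the weak solution is unique, and finally to upgrade the subsequential convergence to convergence of the whole sequence by a Urysohn-type argument. The engine is the a priori estimate (\ref{eq:FINALaprioriEST}), which by the proof of Lemma~\ref{lem:convergence-to-p} is uniform along the entire normal sequence under the hypothesis $u_{\text{ini}},p_{\text{ini}}\in\Cs^{2}\left(\Omega\right)$.

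First, fix arbitrary $v,q\in\Cs_{0}^{\infty}\left(\Omega\right)$ and $\psi,\varphi\in\Cs_{0}^{\infty}\left(\J\right)$, multiply (\ref{eq:semiDisSchemeHeat}) by $v\left(\vec{x}_{K}\right)\psi\left(t\right)$ and (\ref{eq:semiDisSchemePhase}) by $q\left(\vec{x}_{K}\right)\varphi\left(t\right)$, sum over $K\in\Pi_{n}$ and integrate over $\J$. Rewriting through Definition~\ref{def:inner-product-on-mesh} and Lemma~\ref{lem:discrete-to-L2-relationships} (with $\P_{\Pi_{n}}v,\P_{\Pi_{n}}q$ the nodal restrictions of $v,q$), the accumulation terms appear as integrals of $\bigl(\S_{\Pi_{n}}\dot{u}_{\Pi_{n}},\S_{\Pi_{n}}\P_{\Pi_{n}}v\bigr)_{\Ls^{2}\left(\Omega\right)}$-type pairings, the diffusive terms are exactly those treated in Lemma~\ref{lem:gradient-convergence}, and the reaction term is an integral of $f\bigl(\S_{\Pi_{n}}u_{\Pi_{n}},\S_{\Pi_{n}}p_{\Pi_{n}},\xi\bigr)$, with $f$ as in (\ref{eq:reaction-term}), against $\bigl(\S_{\Pi_{n}}\P_{\Pi_{n}}q\bigr)\varphi$. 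Along the subsequence of Lemma~\ref{lem:convergence-to-p}, refined so that Lemma~\ref{lem:gradient-convergence} applies and $\S_{\Pi_{n}}u_{\Pi_{n}}\to u$, $\S_{\Pi_{n}}p_{\Pi_{n}}\to p$ a.e.\ on $\J\times\Omega$, each term passes to the limit: the accumulation terms by weak--strong pairing of $\S_{\Pi_{n}}\dot{u}_{\Pi_{n}}\overset{w}{\longrightarrow}\frac{\partial u}{\partial t}$, $\S_{\Pi_{n}}\dot{p}_{\Pi_{n}}\overset{w}{\longrightarrow}\frac{\partial p}{\partial t}$ in $\Ls^{2}\bigl(\J;\Ls^{2}\left(\Omega\right)\bigr)$ (proof of Lemma~\ref{lem:convergence-to-p}) with $\S_{\Pi_{n}}\P_{\Pi_{n}}v\to v$, $\S_{\Pi_{n}}\P_{\Pi_{n}}q\to q$ in $\Ls^{2}\left(\Omega\right)$ (Lemma~\ref{lem:interp-S}); the diffusive terms by Lemma~\ref{lem:gradient-convergence}; the bounded part $b\beta\xi\varLambda\left(g\left(\cdot,\cdot\right)\right)$ of $f$ by dominated convergence (it is bounded by $b\beta\xi B$ and, by continuity of $g,\varLambda$, converges a.e.\ to $b\beta\xi\varLambda\left(g\left(u,p\right)\right)$); and the cubic part $f_{0}\left(\S_{\Pi_{n}}p_{\Pi_{n}}\right)$ as follows: (\ref{eq:FINALaprioriEST}) bounds $\Q_{\Pi_{n}}p_{\Pi_{n}}$ uniformly in $L^{\infty}\bigl(\J;\Hs_{0}^{1}\left(\Omega\right)\bigr)$ (Poincar\'e for the function part), hence in $L^{\infty}\bigl(\J;\Ls^{6}\left(\Omega\right)\bigr)$ by the three-dimensional embedding $\Hs_{0}^{1}\hookrightarrow\Ls^{6}$; the $\Ls^{6}$ counterpart of Lemma~\ref{lem:Q-S-estimate} (proved the same way) then bounds $\S_{\Pi_{n}}p_{\Pi_{n}}$ in $\Ls^{6}\bigl(\J\times\Omega\bigr)$, so $f_{0}\left(\S_{\Pi_{n}}p_{\Pi_{n}}\right)$ is bounded in $\Ls^{2}\bigl(\J\times\Omega\bigr)$ and, being a.e.\ convergent to $f_{0}\left(p\right)$, converges weakly there, which suffices when paired with the uniformly convergent smooth factor. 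Using the regularity $u,p\in\Ls^{2}\bigl(\J;\Hs_{0}^{1}\left(\Omega\right)\bigr)$, $\frac{\partial u}{\partial t},\frac{\partial p}{\partial t}\in\Ls^{2}\bigl(\J;\Ls^{2}\left(\Omega\right)\bigr)$ from Lemma~\ref{lem:convergence-to-p}, which yields $u,p\in\Cs\bigl([0,T];\Ls^{2}\left(\Omega\right)\bigr)$, one identifies $u\mid_{t=0}=u_{\text{ini}}$, $p\mid_{t=0}=p_{\text{ini}}$ by repeating the passage with test functions vanishing at $t=T$ but not at $t=0$, integrating the accumulation terms by parts in $t$ on the discrete side and using the discrete initial data together with Lemma~\ref{lem:interp-S}. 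Hence $\left(u,p\right)$ is a weak solution in the sense of Definition~\ref{def:weak-solution}.

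For uniqueness, let $\left(u_{1},p_{1}\right),\left(u_{2},p_{2}\right)$ be two weak solutions, extend the weak formulation by density to test functions in $\Ls^{2}\bigl(\J;\Hs_{0}^{1}\left(\Omega\right)\bigr)$, and pass to the enthalpy-type variable $e=u-Lp$, for which $\frac{\partial e}{\partial t}=\Delta u$: then $\bar{e}=\bar{u}-L\bar{p}$ solves a heat equation whose source involves $\bar{p}=p_{1}-p_{2}$ only through $\Delta\bar{p}$, while the reaction enters only the phase-field difference equation. Testing the $\bar{e}$-equation by $\bar{e}$ and the $\bar{p}$-equation by $\bar{p}$, using the one-sided Lipschitz bound $\bigl(f_{0}\left(p_{1}\right)-f_{0}\left(p_{2}\right)\bigr)\left(p_{1}-p_{2}\right)\leq\tfrac{1}{4}\left(p_{1}-p_{2}\right)^{2}$ (i.e.\ $\sup_{\R}f_{0}'=\tfrac{1}{4}$, the semiconvexity of $w_{0}$), the global Lipschitz continuity of $\varLambda\circ g$, and Young's inequality to absorb the diffusive cross terms, one obtains $\frac{\dd}{\dd t}\bigl(\left\Vert \bar{e}\right\Vert _{\Ls^{2}\left(\Omega\right)}^{2}+c\left\Vert \bar{p}\right\Vert _{\Ls^{2}\left(\Omega\right)}^{2}\bigr)\leq C\bigl(\left\Vert \bar{e}\right\Vert _{\Ls^{2}\left(\Omega\right)}^{2}+\left\Vert \bar{p}\right\Vert _{\Ls^{2}\left(\Omega\right)}^{2}\bigr)$ for suitable $c,C>0$; since $\bar{e}\mid_{t=0}=\bar{p}\mid_{t=0}=0$, Gronwall's inequality forces $\bar{e}=\bar{p}=0$, i.e.\ $\bar{u}=\bar{p}=0$. (This step uses that $g$ is Lipschitz; $\varLambda$ is globally Lipschitz for the limiter introduced in Section~\ref{chap:Mathematical-Analysis-of}, since its derivative is bounded.)

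Finally, since the right-hand side of (\ref{eq:FINALaprioriEST}) is uniformly bounded along the entire sequence $\left(\Pi_{n}\right)$, every subsequence of $\bigl(\S_{\Pi_{n}}u_{\Pi_{n}},\S_{\Pi_{n}}p_{\Pi_{n}}\bigr)$ admits, by Lemmas~\ref{lem:convergence-to-p} and~\ref{lem:gradient-convergence} and the above, a further subsequence converging in $\Ls^{2}\bigl(\J;\Ls^{2}\left(\Omega\right)\bigr)$ to a weak solution, necessarily $\left(u,p\right)$ by uniqueness; as $\Ls^{2}\bigl(\J;\Ls^{2}\left(\Omega\right)\bigr)$ is a metric space, the whole sequence then converges to $\left(u,p\right)$, which is the assertion of the theorem. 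I expect the main obstacle to be precisely the passage to the limit in the cubic reaction term: the compact-embedding argument delivers only strong $\Ls^{2}\bigl(\J;\Ls^{2}\left(\Omega\right)\bigr)$ convergence of $\S_{\Pi_{n}}p_{\Pi_{n}}$, which by itself does not carry a cubic nonlinearity, and the saving observation is that the uniform $\Hs_{0}^{1}$ bound coming from the a priori estimate, via the three-dimensional Sobolev embedding, upgrades this to a uniform $\Ls^{6}$ bound and hence to weak $\Ls^{2}$ convergence of $f_{0}\left(\S_{\Pi_{n}}p_{\Pi_{n}}\right)$; the identification of the initial data and the uniqueness estimate are comparatively standard, but rely on the $\Cs^{2}$-regularity of the initial data and on a Lipschitz hypothesis on $g$, respectively.
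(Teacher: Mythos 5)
Your proposal is correct and follows the same skeleton as the paper's proof: multiply the semidiscrete equations by nodal values of smooth test functions, integrate in time, pass to the limit along the subsequence of Lemma \ref{lem:convergence-to-p} using Lemma \ref{lem:gradient-convergence} for the diffusive terms, establish uniqueness of the weak solution, and then upgrade subsequential convergence to convergence of the whole sequence. The differences are ones of rigor rather than route, and they cut in your favor. For the cubic term the paper only observes that strong $\Ls^{2}$ convergence of $\S_{\Pi_{n}}p_{\Pi_{n}}$ gives a.e.\ convergence and then ``takes the limit'' of term III; by itself a.e.\ convergence does not justify passing to the limit in $\int f_{0}\left(\S_{\Pi_{n}}p_{\Pi_{n}}\right)q\varphi$, and your supplementary uniform $\Ls^{6}$ bound via $\Hs_{0}^{1}\hookrightarrow\Ls^{6}$, giving weak $\Ls^{2}$ convergence of $f_{0}\left(\S_{\Pi_{n}}p_{\Pi_{n}}\right)$, supplies exactly the missing equi-integrability. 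One caveat: the ``$\Ls^{6}$ counterpart of Lemma \ref{lem:Q-S-estimate}'' you invoke is the reverse inequality $\left\Vert \S_{\Pi}w\right\Vert \leq C\left\Vert \Q_{\Pi}w\right\Vert$ rather than the stated one, and like $C_{\Pi}$ itself its constant depends on the cell aspect ratios, so a uniform shape-regularity assumption on the normal sequence is needed (the paper's own use of Lemma \ref{lem:Q-S-estimate} in the convergence section has the same unstated requirement); the discrete Sobolev inequality of \cite{FVM-Eymard} applied directly to the piecewise constant interpolant would give the $\Ls^{6}$ bound with a mesh-independent constant. For uniqueness the paper defers to a citation, whereas your enthalpy-variable Gronwall argument with the one-sided bound $\sup_{\R}f_{0}'=\tfrac{1}{4}$ is a complete proof, and you correctly flag that it needs $g$ Lipschitz --- a hypothesis the paper leaves implicit given that $g$ is declared ``arbitrary.'' You also verify the attainment of the initial data, which the paper's proof omits. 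The final Urysohn-type step is identical in both arguments.
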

\begin{proof}
Let us consider the semi-discrete scheme (\ref{eq:semiDisSchemeHeat}),
(\ref{eq:semiDisSchemePhase}) with the initial conditions (\ref{eq:semiDisSchemeInitU}),
(\ref{eq:semiDisSchemeInitP}) and homogeneous Dirichlet boundary
conditions (\ref{eq:ZeroDirichletBC})

\begin{align*}
m\left(K\right)\dot{u}_{K}\left(t\right)+F_{K}\left(u_{\Pi}\left(t\right)\right) & =Lm\left(K\right)\dot{p}_{K}\left(t\right),\forall K\in\Pi_{n}\tag{{\ref{eq:semiDisSchemeHeat}}}\\
\alpha m\left(K\right)\dot{p}_{K}\left(t\right)+F_{K}\left(p_{\Pi}\left(t\right)\right) & =\frac{1}{\xi^{2}}f_{0,K}\left(t\right)\tag{{\ref{eq:semiDisSchemePhase}}}\\
 & -\frac{b\beta}{\xi}\varLambda\left(g\left(u_{K}\left(t\right),p_{K}\left(t\right)\right)\right)m\left(K\right),\forall K\in\Pi_{n}\\
u_{\Pi_{n}}\left|_{\partial\Omega}\right.=0, & p_{\Pi_{n}}\left|_{\partial\Omega}\right.=0,\\
u_{\Pi_{n}}\left(0\right)=\P_{\Pi_{n}}u_{\text{ini}}, & p_{\Pi_{n}}\left(0\right)=\P_{\Pi_{n}}p_{\text{ini}}.
\end{align*}
The existence and uniqueness %
\begin{lyxgreyedout}
ORIGINALLY (WRONG): and independence on $\Pi_{n}$ of the solution
(on $\J$)%
\end{lyxgreyedout}
{} of the solution $\left(u_{\Pi_{n}},p_{\Pi_{n}}\right)$ on $\J$
follows directly from the theory of ordinary differential equations
\cite{Hartman-ODE} and the a priori estimate (\ref{eq:FINALaprioriEST}).
Let $q\in\Cs_{0}^{\infty}\left(\Omega\right)$ be a test function
and denote $q_{\Pi_{n}}\equiv\P_{\Pi_{n}}q,$ i.e. $q_{K}=q\left(\vec{x}_{K}\right)$
according to (\ref{eq:wK-simplified-notation}). Multiplying the equation
(\ref{eq:semiDisSchemePhase}) by $q_{K}$, summing the results over
all $K\in\Pi_{n}$ and using Definition \ref{def:inner-product-on-mesh},
we obtain

\begin{align*}
 & \alpha\xi^{2}\left(\dot{p}_{\varPi_{n}}\left(t\right),q_{\Pi_{n}}\right)_{\Pi_{n}}+\xi^{2}\underset{K\in\Pi_{n}}{\sum}\underset{\sigma\in\E_{K}}{\sum}F_{K,\sigma}\left(t\right)q_{K}\\
= & \left(f_{0}\left(p_{\Pi_{n}}\left(t\right)\right),q_{\Pi_{n}}\right)_{\Pi_{n}}-b\beta\xi\left(\varLambda\left(g\left(u_{\Pi_{n}}\left(t\right),p_{\Pi_{n}}\left(t\right)\right)\right),q_{\Pi_{n}}\right)_{\Pi_{n}}.
\end{align*}
We rewrite some of these terms using the inner product on $\Ls^{2}$ 

\begin{align}
 & \alpha\xi^{2}\left(\S_{\Pi_{n}}\dot{p}_{\Pi_{n}}\left(t\right),\S_{\Pi_{n}}q_{\Pi_{n}}\right)_{\Ls^{2}\left(\Omega\right)}\label{eq:L2normDone}\\
+ & \xi^{2}\underset{K\in\Pi_{n}}{\sum}\underset{\sigma\in\E_{K}}{\sum}F_{K,\sigma}\left(t\right)q_{K}\nonumber \\
= & \left(\S_{\Pi_{n}}f_{0}\left(p_{\Pi_{n}}\left(t\right)\right),\S_{\Pi_{n}}q_{\Pi_{n}}\right)_{\Ls^{2}\left(\Omega\right)}\nonumber \\
- & b\beta\xi\left(\S_{\Pi_{n}}\varLambda\left(g\left(u_{\Pi_{n}}\left(t\right),p_{\Pi_{n}}\left(t\right)\right)\right),\S_{\Pi_{n}}q_{\Pi_{n}}\right)_{\Ls^{2}\left(\Omega\right)}.\nonumber 
\end{align}
Consider a function $\varphi\in\Cs_{0}^{\infty}\left(\J\right)$.
Multiplying (\ref{eq:L2normDone}) by $\varphi$ and integrating over
$\J$ gives

\begin{align}
 & \alpha\xi^{2}\stackrel[0]{T}{\int}\left(\S_{\Pi_{n}}\dot{p}_{\Pi_{n}}\left(t\right),\S_{\Pi_{n}}q_{\Pi_{n}}\right)_{\Ls^{2}\left(\Omega\right)}\varphi\left(t\right)\dd t\label{eq:L2normDone-1}\\
+ & \xi^{2}\stackrel[0]{T}{\int}\underset{K\in\Pi_{n}}{\sum}\underset{\sigma\in\E_{K}}{\sum}F_{K,\sigma}\left(t\right)q_{K}\varphi\left(t\right)\dd t\nonumber \\
= & \stackrel[0]{T}{\int}\left(\S_{\Pi_{n}}f_{0}\left(p_{\Pi_{n}}\left(t\right)\right),\S_{\Pi_{n}}q_{\Pi_{n}}\right)_{\Ls^{2}\left(\Omega\right)}\varphi\left(t\right)\dd t\nonumber \\
- & b\beta\xi\stackrel[0]{T}{\int}\left(\S_{\Pi_{n}}\varLambda\left(g\left(u_{\Pi_{n}}\left(t\right),p_{\Pi_{n}}\left(t\right)\right)\right),\S_{\Pi_{n}}q_{\Pi_{n}}\right)_{\Ls^{2}\left(\Omega\right)}\varphi\left(t\right)\dd t.\nonumber 
\end{align}
By applying integration by parts to the first term of the equality
and using the properties of $\varphi$, we get 

\begin{align}
- & \overbrace{\alpha\xi^{2}\stackrel[0]{T}{\int}\left(\S_{\Pi_{n}}p_{\Pi_{n}}\left(t\right),\S_{\Pi_{n}}q_{\Pi_{n}}\right)_{\Ls^{2}\left(\Omega\right)}\dot{\varphi}\left(t\right)\dd t}^{\text{I.}}\label{eq:beforeLimFinalEq}\\
+ & \overbrace{\xi^{2}\stackrel[0]{T}{\int}\underset{K\in\Pi_{n}}{\sum}\underset{\sigma\in\E_{K}}{\sum}F_{K,\sigma}\left(t\right)q_{K}\varphi\left(t\right)\dd t}^{\text{II.}}\nonumber \\
= & \overbrace{\stackrel[0]{T}{\int}\left(\S_{\Pi_{n}}f_{0}\left(p_{\Pi_{n}}\left(t\right)\right),\S_{\Pi_{n}}q_{\Pi_{n}}\right)_{\Ls^{2}\left(\Omega\right)}\varphi\left(t\right)\dd t}^{\text{III.}}\nonumber \\
- & \overbrace{b\beta\xi\stackrel[0]{T}{\int}\left(\S_{\Pi_{n}}\varLambda\left(g\left(u_{\Pi_{n}}\left(t\right),p_{\Pi_{n}}\left(t\right)\right)\right),\S_{\Pi_{n}}q_{\Pi_{n}}\right)_{\Ls^{2}\left(\Omega\right)}\varphi\left(t\right)\dd t}^{\text{IV.}}.\nonumber 
\end{align}
We investigate the limits of the individual terms in (\ref{eq:beforeLimFinalEq}),
considering again a suitable mesh subsequence $\Pi_{k_{n}}$ (see
Lemma \ref{lem:convergence-to-p}) denoted as $\Pi_{n}$ for brevity.

Thanks to the relationships (\ref{eq:interpolationToOrg}) and (\ref{eq:conv2}),
taking the limit of I. results in

\begin{align*}
\alpha\xi^{2}\stackrel[0]{T}{\int}\left(\S_{\Pi_{n}}p_{\Pi_{n}}\left(t\right),\S_{\Pi_{n}}q_{\Pi_{n}}\right)_{\Ls^{2}\left(\Omega\right)}\dot{\varphi}\left(t\right)\dd t & \rightarrow\alpha\xi^{2}\stackrel[0]{T}{\int}\left(p,q\right)_{\Ls^{2}\left(\Omega\right)}\dot{\varphi}\left(t\right)\dd t.
\end{align*}
Since the strong convergence $\S_{\Pi_{n}}p_{\Pi_{n}}\rightarrow p$
in $\Ls^{2}\left(\J;\Ls^{2}\left(\Omega\right)\right)$ signifies
convergence almost everywhere in $\J\times\Omega$ \cite{Kufner-Function-Spaces}
the limit of III. may be taken

\[
\stackrel[0]{T}{\int}\left(\S_{\Pi_{n}}f_{0}\left(p_{\Pi_{n}}\left(t\right)\right),\S_{\Pi_{n}}q_{\Pi_{n}}\right)_{\Ls^{2}\left(\Omega\right)}\varphi\left(t\right)\dd t\rightarrow\stackrel[0]{T}{\int}\left(f_{0}\left(p\right),q\right)_{\Ls^{2}\left(\Omega\right)}\varphi\left(t\right)\dd t.
\]
Similarly using (\ref{eq:interpolationToOrg}) the limit of IV. may
be taken

\begin{align*}
b\beta\xi\stackrel[0]{T}{\int}\left(\S_{\Pi_{n}}\varLambda\left(g\left(u_{\Pi_{n}}\left(t\right),p_{\Pi_{n}}\left(t\right)\right)\right),\S_{\Pi_{n}}q_{\Pi_{n}}\right)_{\Ls^{2}\left(\Omega\right)}\varphi\left(t\right)\dd t\\
\rightarrow b\beta\xi F\stackrel[0]{T}{\int}\left(\varLambda\left(g\left(u,p\right)\right),q\right)_{\Ls^{2}\left(\Omega\right)}\varphi\left(t\right)\dd t.
\end{align*}
Since we are considering the homogeneous Dirichlet boundary condition
for $p$, the term V. is equal to zero.

Using Lemma \ref{lem:gradient-convergence}, the limit of II. reads

\[
\xi^{2}\stackrel[0]{T}{\int}\left(\underset{K\in\Pi_{n}}{\sum}\underset{\sigma\in\E_{K}}{\sum}F_{K,\sigma}\left(t\right)q_{K}\left(\vec{x}\right)\right)\varphi\left(t\right)\dd t\rightarrow\xi^{2}\stackrel[0]{T}{\int}\left(\underset{\Omega}{\int}\nabla p\left(t,\vec{x}\right)\cdot\nabla q\left(\vec{x}\right)\dd\vec{x}\right)\varphi\left(t\right)\dd t.
\]

After passing to the limit, the relationship (\ref{eq:beforeLimFinalEq})
becomes the weak equality (\ref{eq:weakFormulation-p}), i.e. $p$
is the solution of the phase field equation. A similar procedure may
be performed to show that $u$ is the weak solution of the heat equation.
The uniqueness of the solution may be shown using a similar procedure
as in \cite{Benes-Diffuse-Interface}, using the specific form of
$f(u,p,\xi)=f_{0}\left(p\right)-b\beta\xi\varLambda\left(g\left(u,p\right)\right)$.%
\begin{lyxgreyedout}
\textbf{NOTE (Ales):} Actually it can be proved in a more elegant
way using Green's functions and extending local existence to global
using invariant regions, maybe we can also mention this. Aleš comment:
Let's not discuss this in this article, it requires a lot of definitions
and would be confusing at this point.%
\end{lyxgreyedout}
{} This implies that all convergent subsequences $\left(\S_{\Pi_{k_{n}}}u_{\Pi_{k_{n}}},\S_{\Pi_{k_{n}}}p_{\Pi_{k_{m}}}\right)$
have the same unique limit and thus the whole sequence $\left(\S_{\Pi_{n}}u_{\Pi_{n}},\S_{\Pi_{n}}p_{\Pi n}\right)$
converges to $\left(u,p\right)$ in $\Ls^{2}\left(\J;\Ls^{2}\left(\Omega\right)\right).$%
\begin{lyxgreyedout}
\textbf{NOTE:} We show this in terms of (an almost) general sequence
in a metric space $\left(M,\rho\right)$.

Assumptions:
\begin{enumerate}
\item There exists a subsequence $p_{k_{n}}$ such that $p_{k_{n}}\to p$
\item For each such possible convergent subsequence $p_{k_{n}'}$, the limit
is unique, i.e. $p_{k_{n}'}\to p$
\end{enumerate}
Consequence: $p_{n}\to p$. Assume the opposite, i.e.
\[
p_{n}\nrightarrow p\;\iff\;\left(\exists\varepsilon>0\right)\left(\forall n_{0}\right)\left(\exists n>n_{0}\right)\left(\rho\left(p_{n},p\right)>\varepsilon\right).
\]
This yields the existence of a subsequence $p_{l_{n}}$ satisfying
$\left(\forall n\in\N\right)\left(\rho\left(p_{l_{n}},p\right)>\varepsilon\right)$.
However, this sequence has the same properties (beyond the structure
of the metric space) as $p_{n}$ and thus it contains a convergent
subsequence $p_{l_{k_{n}}}$. As $p_{l_{k_{n}}}$ is also a convergent
subsequence of $p_{n}$, its limit is $p$, which is a contradiction
with $\left(\forall n\in\N\right)\left(\rho\left(p_{l_{k_{n}}},p\right)>\varepsilon\right)$.%
\end{lyxgreyedout}
\end{proof}

\section{Conclusion}

This paper provides a detailed proof of existence of the weak solution
and convergence of the finite volume scheme to this solution for the
isotropic phase field model suitable for solidification modeling in
polyhedral domains covered by admissible polyhedral meshes. We consider
a general form of the reaction term in the phase field equation which
allows to apply the presented results to existing models \cite{Kobayashi-PF-Dendritic}
as well as several new variants of the phase field model presented
in our work \cite{arXiv-PhaseField-Focusing-Latent-Heat}. We show
that introducing an artificial limiter of the reaction term makes
it possible to perform the analysis while not affecting the simulation
results \cite{arXiv-PhaseField-Focusing-Latent-Heat}. A semi-discrete
form of the scheme is used, leaving temporal discretization up to
the reader's choice.

%LATEX_EXPORT_END

\bigskip{}
\noindent \textbf{\small{}Acknowledgment:}\\
{\small{}
This work is part of the project \emph{Centre of Advanced Applied
Sciences} (Reg. No. CZ.02.1.01/0.0/0.0/16-019/0000778), co-financed
by the European Union. Partial support of grant No. SGS20/184/OHK4/3T/14
of the Grant Agency of the Czech Technical University in Prague.
}

\bibliographystyle{model1b-num-names}
%\bibliography{References/publications,References/references_MR-DTI,References/references_MATH-PHYS,References/references_IT}
\bibliography{References/publications,References/references_MATH-PHYS}

\end{document}